\newtheorem{lemma}{Lemma}
\newtheorem{theorem}{Theorem}
\newtheorem{corollary}{Corollary}
\newcommand {\p} {\mathbb{P}}
\newcommand {\ve} {\varepsilon}
\def\blfootnote{\xdef\@thefnmark{}\@footnotetext}\makeatother
\title[Metric results on the discrepancy of sequences $\left(a_{n}
\alpha\right)_{n \geq 1}$]{\bf Metric results on the discrepancy 
of sequences $\left(a_{n} \alpha\right)_{n \geq 1}$ modulo one for integer
sequences $\left(a_{n}\right)_{n \geq 1}$ of polynomial growth}
\author{Christoph Aistleitner} 
\address{ Institute of Financial Mathematics and applied Number Theory,
University Linz}
\email{aistleitner@math.tugraz.at}
\author{Gerhard Larcher} 
\address{ Institute of Financial Mathematics and applied Number Theory,
University Linz}
\email{gerhard.larcher@jku.at}
\thanks{The first author is supported by a Schr\"odinger scholarship of the
Austrian
Research Foundation (FWF). The second author is supported by the Austrian
Science Fund (FWF): Project F5507-N26, which is part of the Special Research
Program "Quasi-Monte Carlo Methods: Theory and Applications"}
\begin{document}

\begin{abstract}
An important result of H. Weyl states that for every sequence
$\left(a_{n}\right)_{n \geq 1}$ of distinct positive integers the sequence of
fractional parts of $\left(a_{n} \alpha \right)_{n\geq 1}$ is uniformly
distributed modulo one for almost all $\alpha$. However, in general it is a very hard
problem to calculate the precise order of convergence of the discrepancy of $\left(\left\{a_{n} \alpha\right\}\right)_{n \geq 1}$ for almost all
$\alpha$. In particular it is very difficult to give sharp lower bounds for the
speed of convergence. Until now this was only carried out for lacunary sequences
$\left(a_{n}\right)_{n \geq 1}$ and for some special cases such as the
Kronecker sequence $\left(\left\{n \alpha\right\}\right)_{n \geq1}$ or the sequence $\left(\left\{n^2 \alpha\right\}\right)_{n \geq1}$. In the
present paper we answer the question for a large class of sequences
$\left(a_{n}\right)_{n \geq 1}$ including as a special case all polynomials $a_{n} =
P\left(n\right)$ with $P \in \mathbb{Z} \left[x\right]$ of degree at least 2.
\end{abstract}

\date{}
\maketitle

\section{Introduction} \label{sect_1}

In the present paper we will study distribution properties of sequences $\left(x_{n}\right)_{n \geq
1}$ of the form $x_{n}=\left\{a_{n} \alpha\right\}$ in the unit interval, where
$\alpha$ is a given real and $\left(a_{n}\right)_{n \geq 1}$ is a given sequence
of integers, and where $\{\cdot\}$ denotes the fractional part. In particular we
are interested in the behavior of the discrepancy $D_{N}$ of these sequences
from a metric point of view.\\

The discrepancy $D_{N}$ of a sequence $\left(x_{n}\right)_{n \geq 1}$ in
$\left[\left. 0,1\right)\right.$ is given by
$$
D_{N} = \underset{0 \leq a < b \leq 1}{\sup} \left|\frac{A_{N}
\left(\left[\left.a,b\right.\right)\right)}{N} - \left(b-a\right)\right|
$$
where $A_{N} \left(\left[\left. a,b\right.\right)\right) = \# \left\{1 \leq n
\leq N \left| \right. x_{n} \in \left[\left. a,b\right.\right)\right\}$. The
sequence $\left(x_{n}\right)_{n \geq 1}$ is uniformly distributed in
$\left[0,1\right]$ if and only if $\underset{N \rightarrow \infty}{\lim} D_{N} =
0$. It was shown by H. Weyl~\cite{wey} that for every sequence
$\left(a_{n}\right)_{n \geq 1}$ of distinct positive integers the sequence
$\left(\left\{a_{n} \alpha\right\}\right)_{n \geq 1}$ is uniformly distributed
for almost all $\alpha$. In~\cite{baker} R.C. Baker gave a corresponding
discrepancy estimate: Let $\left(a_{n}\right)_{n \geq 1}$ be a strictly
increasing sequence of positive integers. Then for almost all $\alpha$ for the
discrepancy $D_{N}$ of $\left(\left\{a_{n} \alpha\right\}\right)_{n \geq1}$ we
have
\begin{equation} \label{baker}
ND_{N} = \mathcal{O} \left(N^{\frac{1}{2}} \left(\log
N\right)^{\frac{3}{2}+\varepsilon}\right)
\end{equation}
for all $\varepsilon > 0$. It is known that this result is best possible, up to
logarithmic terms, as a generic result covering \emph{all} strictly increasing
integer sequences $(a_n)_{n \geq 1}$ (see~\cite{bpt}). However, in most cases of
interesting particular sequences $(a_n)_{n \geq 1}$ no metric lower bounds for
the discrepancy of $\left(\left\{a_{n} \alpha\right\}\right)_{n \geq 1}$ are
known at all, and in these cases it is totally unclear how close the generic upper bound
in~\eqref{baker} comes to the ``correct'' metric order of the discrepancy.\\

One case where the precise metric asymptotic order of the discrepancy of
$\left(\left\{a_{n} \alpha\right\}\right)_{n \geq 1}$ is known is the case when $\left(a_{n}\right)_{n \geq 1}$ is a lacunary sequence, i.e., if
$\frac{a_{n+1}}{a_{n}} \geq 1+\delta$ for some constant $\delta > 0$. For this
case, where certain independence properties of the sequence $\left(\left\{a_{n}
\alpha\right\}\right)_{n \geq 1}$ can be used,  W. Philipp~\cite{philipp} proved
that for almost all $\alpha$
\begin{equation} \label{phil}
\frac{1}{4 \sqrt{2}} \leq \underset{N \rightarrow \infty}{\lim \sup}
\frac{ND_{N}}{\sqrt{2 N \log \log N}} \leq c_\delta
\end{equation}
holds. Even more precise results were obtained by K. Fukuyama~\cite{fu2}, taking
into account the number-theoretic structure of $\left(a_{n}\right)_{n \geq 1}$.
It is interesting to note that the asymptotic result in~\eqref{phil} is in
accordance with the Chung--Smirnov law of the iterated logarithm, which
prescribes a discrepancy of order $(N \log \log N)^{-1/2}$ for the discrepancy
of a ``random'' sequence in the unit interval, almost surely. \\

For the case $a_{n} =n$, that is for the Kronecker sequence, it follows from
results of Khintchine~\cite{khin} in the metric theory of continued fractions
that
\begin{equation} \label{kh1}
ND_{N} = \mathcal{O} \left(\log N \left(\log \log
N\right)^{1+\varepsilon}\right)
\end{equation}
and
\begin{equation} \label{kh2}
ND_{N} = \Omega \left(\log N \log \log N\right)
\end{equation}
hold for every $\varepsilon > 0$ for almost all $\alpha$. Of course the same
result also holds in the case when $a_{n}$ is a polynomial of degree 1 in $n$.\\

For the case $a_{n} =n^2$ it follows from a result of Fiedler, Jurkat and K\"orner~\cite{fjk} that
$$
ND_{N} = \Omega \left(N^{\frac{1}{2}} (\log N )^{\frac{1}{4}}\right)
$$
holds.\\

Apart from the case of lacunary $\left(a_{n}\right)_{n \geq 1}$, the
classical case of the pure Kronecker sequence $\left(\left\{n \alpha
\right\}\right)_{n \geq 1}$, and from the example $\left(\left\{n^2 \alpha
\right\}\right)_{n \geq 1}$ only for a few further examples the precise metric
order of the discrepancy of $\left(\left\{a_{n} \alpha\right\}\right)_{n \geq1}$
is known. A very interesting special example was given recently in~\cite{AHL}.
Here for the first time an example for $\left(a_{n}\right)_{n \geq 1}$ was given
where the metric order of $ND_{N}$ is strictly between $N^{\varepsilon}$ and
$N^{\frac{1}{2}-\ve}$. Take for $\left(a_{n}\right)_{n \geq 1}$ the sequence of
integers with an even sum of digits in base 2. Then for almost all $\alpha$ we
have
$$
ND_{N}=\mathcal{O} \left(N^{\kappa+\varepsilon}\right)
$$
and
$$
ND_{N} = \Omega \left(N^{\kappa-\varepsilon}\right)
$$
for all $\varepsilon > 0$, where $\kappa$ is a constant of the form
$\kappa=0,404\ldots$. Interestingly, the precise value of the constant $\kappa$
is still unknown; for details see~\cite{AHL} and~\cite{foumau}.\\

Until now, however, to the best of our knowledge, nothing was known on the exact
metric order of the discrepancy of the sequences $\left(\left\{n^{k}
\alpha\right\}\right)_{n \geq 1}$ for $k \ge 3$ or related polynomial sequences, apart from
the general upper bound in~\eqref{baker}. In particular, it seems that no
non-trivial lower bound whatsoever was known in this case.\\

In this paper we will show that for a large class of sequences
$\left(a_{n}\right)_{n \geq 1}$ the discrepancy $D_{N}$ of $\left(\left\{a_{n} \alpha\right\}\right)_{n \geq
1}$ has an asymptotic order of roughly $\frac{1}{\sqrt{N}}$. This class in
particular contains all sequences $a_{n} = P(n)$ with $P\left(n\right) \in
\mathbb{Z} \left[x\right]$ of degree larger or equal 2. Consequently, together
with the results from~\eqref{kh1} and~\eqref{kh2}, we have now a fairly complete
understanding of the metric discrepancy behavior of sequences generated by
polynomials with integer coefficients.\\

Our main result is the following:

\begin{theorem} \label{th_1}
Let $P \in \mathbb{Z} \left[x\right]$ be a polynomial of degree $d \geq 2$ and
let $\left(m_{n}\right)_{n \geq 1}$ be an arbitrary sequence of pairwise
different integers with $\left|m_{n}\right| \leq n^{t}$ for some $t \in
\mathbb{N}$ and all $n \geq n(t)$. Then for the discrepancy $D_{N}$ of the
sequence $\left(\left\{P\left(m_{n}\right)\alpha \right\}\right)_{n \geq 1}$ we
have for almost all $\alpha$
$$
ND_{N} \geq N^{\frac{1}{2}-\varepsilon}
$$
for all $\varepsilon >0$ and for infinitely many $N$.
\end{theorem}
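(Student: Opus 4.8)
Write $e(t)=e^{2\pi i t}$ and, for $\alpha\in[0,1)$, set $S_N(\alpha)=\sum_{n=1}^{N}e\bigl(P(m_n)\alpha\bigr)$. The first move is to reduce the discrepancy lower bound to a lower bound for $|S_N|$. Let $\Delta_N(x)=\tfrac1N A_N\bigl([0,x)\bigr)-x$; then $D_N\ge\sup_{x}|\Delta_N(x)|=\|\Delta_N\|_\infty$, and a direct computation (integration by parts) gives $\int_0^1\Delta_N(x)e(x)\,dx=-\tfrac{1}{2\pi i N}S_N(\alpha)$, whence
\[
ND_N\ \ge\ \frac{1}{2\pi}\,\bigl|S_N(\alpha)\bigr|\qquad(\text{all }N,\ \text{all }\alpha).
\]
Thus it suffices to show that for almost all $\alpha$ one has $|S_N(\alpha)|\ge N^{1/2-\varepsilon}$ for every $\varepsilon>0$ and infinitely many $N$.

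The second ingredient is moment information. Since the $m_n$ are pairwise distinct and $\deg P=d$, every integer is taken by $P$ on $\{m_n\}$ at most $d$ times, so $\int_0^1|S_N|^2\,d\alpha=\#\{(n,n')\le N:\ P(m_n)=P(m_{n'})\}\in[N,dN]$, which identifies $\sqrt N$ as the correct order "on average". For the fourth moment one uses that $x-y\mid P(x)-P(y)$ in $\mathbb{Z}[x,y]$, say $P(x)-P(y)=(x-y)Q(x,y)$ with $\deg_xQ=d-1\ge1$; running over the divisors $e\mid w$ of a nonzero integer $w$ and solving $Q(y+e,y)=w/e$ in $y$ shows $\#\{(x,y)\in\mathbb{Z}^2:P(x)-P(y)=w\}\ll_d\tau(|w|)$ for $w\ne0$. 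Combining this with $\sum_{w}r_N(w)=N^2$, where $r_N(w)=\#\{(n,n')\le N:P(m_n)-P(m_{n'})=w\}$, and with $|w|\ll_t N^{td}$ (which holds because $|m_n|\le n^t$), one gets $\int_0^1|S_N|^4\,d\alpha=\sum_w r_N(w)^2\ll_\varepsilon N^{2+\varepsilon}$. These two estimates alone only yield that $|S_N(\alpha)|\ge\tfrac12\sqrt N$ on a set of measure $\gg N^{-\varepsilon}$, which is not summable along a geometric sequence of scales; the moment method is therefore not enough by itself, and the real content lies elsewhere.

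The heart of the argument is the passage to an almost-everywhere, infinitely-often lower bound. The plan is to exploit that $S_N(\alpha)$ is large in the vicinity of suitable rationals. For $\alpha=a/q+\beta$ one has $S_N(a/q+\beta)=\sum_v\mu_N(v)e(va/q)e(v\beta)$, where $\mu_N(v)=\#\{n\le N:P(m_n)=v\}$; by Parseval on $\mathbb{Z}/q\mathbb{Z}$, $\sum_{a\bmod q}|S_N(a/q)|^2=q\sum_r\nu_N(r)^2\ge N^2$ with $\nu_N(r)=\#\{n\le N:P(m_n)\equiv r\ (q)\}$, so there are admissible $a$ with $|S_N(a/q)|\gg N/\sqrt q$; for $q$ a well-chosen prime power one can do better and obtain $|S_N(a/q)|\gg N q^{-1/d}$ from the size $\gg q^{1-1/d}$ of the complete exponential sum $\sum_{n\bmod q}e(P(n)a/q)$ — this improvement over $\sqrt q$ is what makes the degrees $d\ge3$ accessible (for $d=2$ this recovers the Gauss‑sum mechanism behind the Fiedler–Jurkat–Körner bound). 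A van der Corput / major‑arc estimate then transfers this to $|S_N(a/q+\beta)|\gg N^{1/2-\varepsilon}$ for $\beta$ in a short arc about $a/q$. Finally, a quantitative Borel–Cantelli argument over the (essentially disjoint) collections of such arcs at a sparse sequence of scales $N_k$, combined with the metric theory of Diophantine approximation, shows that almost every $\alpha$ lies in infinitely many of them, which gives the required $|S_N(\alpha)|\ge N^{1/2-\varepsilon}$ infinitely often.

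The main obstacles are concentrated in this last step. First, the major‑arc asymptotic for $S_N(a/q+\beta)$ carries a minor‑arc/tail error, and one must keep the arcs narrow enough — and choose the moduli $q$ in the right range relative to $N$ — that this error stays below the main term $\asymp N^{1/2-\varepsilon}$; simultaneously the arcs must be numerous enough at each scale $N_k$ that the Borel–Cantelli series diverges, which for $d\ge 3$ forces the use of prime‑power moduli and a careful bookkeeping of admissible fractions. Second, the hypothesis that $(m_n)$ is an arbitrary sequence of pairwise distinct integers with $|m_n|\le n^t$ (rather than $m_n=n$) has to be absorbed: one needs to control the distribution of $P(m_n)\bmod q$ and of $P(m_n)$ itself, reducing the general case to the monomial/pure‑polynomial situation, and it is precisely here that the polynomial growth condition is used. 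Controlling the interplay between these arithmetic inputs and the metric covering argument is the technically demanding part of the proof; the reduction in the first paragraph and the moment bounds in the second are routine.
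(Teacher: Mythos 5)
Your first two paragraphs match the paper's actual route: the reduction to the exponential sum $S_N$, the second-moment identity, and the fourth-moment bound via the factorization $P(x)-P(y)=(x-y)Q(x,y)$ and the divisor function are exactly the ingredients of the paper's Theorem~2 (where they are combined by H\"older, $\int_0^1|S_N|\,d\alpha\geq(\int_0^1|S_N|^2)^{3/2}(\int_0^1|S_N|^4)^{-1/2}\gg N^{1/2-\varepsilon}$, to get an $L^1$ lower bound). You also correctly diagnose that the moment method alone does not give an almost-everywhere, infinitely-often statement. The problem is that your proposed resolution of this — the entire third paragraph — is a different strategy that is not carried out and, as sketched, does not work. (i) The lower bound $|\sum_{n\bmod q}e(P(n)a/q)|\gg q^{1-1/d}$ is an unproven assertion: Weil/Hua give only \emph{upper} bounds of this shape, and for a general $P$ of degree $d\geq3$ and general moduli the complete sum can be much smaller or vanish; isolating moduli where it is provably large is itself a hard problem. (ii) More fatally, the major-arc expansion of $S_N(a/q+\beta)$ has no reason to exist for an \emph{arbitrary} injective sequence $(m_n)$ with $|m_n|\leq n^t$: the residues $P(m_n)\bmod q$ need not equidistribute and $P(m_n)$ is not monotone or smooth in $n$, so there is no van der Corput transfer. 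You flag this as an ``obstacle to be absorbed,'' but it is not absorbed. (iii) The ``quantitative Borel--Cantelli over arcs'' step is precisely the crux, and divergence-type Borel--Cantelli requires a quasi-independence input that your sketch does not supply; note also that you have pinned yourself to the strictly stronger target ``$|S_N(\alpha)|\geq N^{1/2-\varepsilon}$ i.o.\ for a.e.\ $\alpha$,'' which the paper never proves.

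The paper's mechanism at this point is genuinely different and is where all the work lies. It uses the Koksma-type inequality with a \emph{dilation parameter}, $ND_N\geq\frac{1}{4H}\left|\sum_{n=1}^N e^{2\pi iHa_n\alpha}\right|$ for every positive integer $H$ (you only use $H=1$). The $L^1$ bound shows that $|S_{B_L}|$ exceeds roughly $B_L^{1/2-\varepsilon}$ on a level set of measure at least $B_L^{-1/2-O(\varepsilon)}$; this set is far too small to contain a.e.\ $\alpha$, but the paper's Theorem~4 shows that for a.e.\ $\alpha$ the orbit $\{h\alpha\}$, $h\leq(1+\eta)^L/\mathbb{P}(R_L)$, hits such a set $R_L$ for infinitely many $L$. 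That metric theorem is proved by bounding the variance of $\sum_{h\leq H_L}\mathbb{I}_L(h\alpha)$ through a GCD sum and Hilberdink's spectral bound, followed by Chebyshev and the (convergence) Borel--Cantelli lemma — this is the substitute for the independence you would need in your arc construction. Plugging $H=h_L$ into the Koksma inequality then costs only a factor $h_L\approx B_L^{1/2+O(\varepsilon)}$, which the size of $|S_{B_L}(h_L\alpha)|\cdot\mathbb{P}(R_L)$ can absorb. Without this dilation-plus-GCD-sum device (or a worked-out replacement), your outline does not close, so the proposal has a genuine gap at its central step.
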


As a direct consequence of Theorem~\ref{th_1} (by choosing $m_n = n$) we obtain the following corollary.

\begin{corollary} \label{co_1}
Let $P \in \mathbb{Z} \left[x\right]$ be a polynomial of degree $d \geq 2$. Then for the discrepancy $D_{N}$ of the
sequence $\left(\left\{P(n) \alpha \right\}\right)_{n \geq 1}$ we
have for almost all $\alpha$
$$
ND_{N} \geq N^{\frac{1}{2}-\varepsilon}
$$
for all $\varepsilon >0$ and for infinitely many $N$.
\end{corollary}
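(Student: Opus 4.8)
The plan is to reduce the statement to a lower bound for a single exponential sum and then combine a variance estimate with an arithmetic fourth‑moment bound.

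First I would pass from the discrepancy to the exponential sum $S_N(\alpha):=\sum_{n=1}^{N}e^{2\pi i P(m_n)\alpha}$. Applying Koksma's inequality to the real and imaginary parts of $x\mapsto e^{2\pi i x}$ (each of total variation $4$ on $[0,1]$) gives $ND_N(\alpha)\ge \tfrac18|S_N(\alpha)|$, so it suffices to show that for almost all $\alpha$ one has $|S_N(\alpha)|\ge N^{1/2-\varepsilon}$ for infinitely many $N$. Expanding and integrating, $\int_0^1|S_N(\alpha)|^2\,d\alpha=\#\{(j,k)\in\{1,\dots,N\}^2:P(m_j)=P(m_k)\}$, and since the $m_n$ are pairwise distinct and $P(x)=c$ has at most $d$ roots, this count lies between $N$ and $dN$; hence $\|S_N\|_{L^2[0,1]}\asymp\sqrt N$.

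The arithmetic heart of the argument — and the place where $\deg P\ge 2$ is used — is a fourth‑moment bound: I would show
$$\int_0^1|S_N(\alpha)|^4\,d\alpha=\#\{(i,j,k,\ell):P(m_i)+P(m_j)=P(m_k)+P(m_\ell),\ i,j,k,\ell\le N\}\ \ll_{P,t}\ N^{2+\varepsilon}.$$
Writing $P(x)-P(y)=(x-y)Q(x,y)$ with $Q$ of degree $d-1\ge1$, fix $i,k$, put $w=P(m_k)-P(m_i)$, and when $w\ne0$ count pairs $(j,\ell)$ with $P(m_j)-P(m_\ell)=w$: the integer $m_j-m_\ell$ must divide $w$, and since $|w|\ll N^{td}$ a divisor bound gives $N^{o(1)}$ admissible values, each of which pins down $m_\ell$ through a polynomial equation of degree $d-1$; the diagonal part $w=0$ contributes $O(N^2)$ because $P$ is at most $d$‑to‑one. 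The same computation controls the cross‑correlations $\int_0^1|S_{N'}|^2|S_{N''}|^2\,d\alpha$ needed later and, through a smoothed Erdős–Turán expansion, yields $\int_0^1(ND_N)^4\,d\alpha\ll N^{2+\varepsilon}$. Feeding the second and fourth moments into the Paley–Zygmund inequality then gives $\big|\{\alpha:|S_N(\alpha)|\ge N^{1/2-\varepsilon}\}\big|\gg N^{-\varepsilon}$ for every large $N$.

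The remaining, and hardest, step is to upgrade "positive measure for each $N$" to "almost every $\alpha$, infinitely many $N$". Here I would work with the increments $D_k(\alpha)=S_{N_k}(\alpha)-S_{N_{k-1}}(\alpha)$ over a block sequence $(N_k)$ that grows only polynomially — slowly enough that $\sum_k\big|\{|D_k|\ge|N_k-N_{k-1}|^{1/2-\varepsilon}\}\big|=\infty$ — and exploit that increments over disjoint blocks are quasi‑orthogonal, in fact genuinely orthogonal once one deletes one representative from each fibre of $P$. Their pairwise correlations $\int_0^1|D_j|^2|D_k|^2\,d\alpha$ are then estimated by the fourth‑moment method above, so that a divergence (Erdős–Rényi / Kochen–Stone) form of the Borel–Cantelli lemma, tolerating the quasi‑independence, shows that $|D_k(\alpha)|\ge|N_k-N_{k-1}|^{1/2-\varepsilon}$ for infinitely many $k$, for almost all $\alpha$; since on this event at least one of $|S_{N_{k-1}}|,|S_{N_k}|$ is $\gg N_k^{1/2-\varepsilon}$, a countable intersection over $\varepsilon\to0$ finishes the proof. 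The genuine obstacle is that the available anti‑concentration is only polynomially strong in $N$ while the decorrelation between blocks is only approximate (with an unavoidable $N^{o(1)}$ divisor loss); balancing the block length against $\varepsilon$, and using Baker's bound \eqref{baker} to discard the contribution of atypically large values of $ND_N$ when splitting the second moment, is the delicate part of the argument.
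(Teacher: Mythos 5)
Your opening moves match the paper's: the reduction to the exponential sum via Koksma's inequality and the computation of the second moment ($\asymp N$) and fourth moment ($\ll N^{2+\varepsilon}$, via the factorization $P(x)-P(y)=(x-y)Q(x,y)$ and the divisor bound) are exactly the content of the proofs of Theorems~\ref{th_2} and~\ref{th_1} (the paper then converts these into an $L^1$ lower bound by H\"older). But your final step --- upgrading ``$|S_N|\geq N^{1/2-\varepsilon}$ on a set of measure $\gg N^{-\varepsilon}$'' to an almost-everywhere statement via Kochen--Stone applied to block increments --- has a genuine gap, and it is precisely the obstacle the paper is built to circumvent. Kochen--Stone needs $\sum_{j,k\leq n}\mathbb{P}(E_j\cap E_k)\ll\bigl(\sum_{k\leq n}\mathbb{P}(E_k)\bigr)^2$. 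Your only lower bound on $\mathbb{P}(E_k)$ is the Paley--Zygmund bound $\gg M_k^{-\varepsilon}$ (with $M_k=N_k-N_{k-1}$), while your only handle on the joint events is Markov applied to the cross moment $\int_0^1|D_j|^2|D_k|^2\,d\alpha\ll (M_jM_k)^{1+\varepsilon}$, which yields $\mathbb{P}(E_j\cap E_k)\ll (M_jM_k)^{3\varepsilon}$ --- a bound that is \emph{worse than the trivial bound} $\mathbb{P}(E_j\cap E_k)\leq 1$, and in any case exceeds the target $\approx\mathbb{P}(E_j)\mathbb{P}(E_k)\gg(M_jM_k)^{-\varepsilon}$ by a polynomial factor. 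Orthogonality of the increments $D_j,D_k$ (or of their squares) does not control the correlation of threshold events whose probabilities are themselves only polynomially small; no choice of block lengths repairs this, because the loss and the gain are both powers of $N_k$. Moreover, even if Kochen--Stone applied, it would only give $\mathbb{P}(\limsup E_k)>0$, and there is no zero--one law available here to promote that to full measure.

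The paper avoids this entirely by a different mechanism: it fixes the block length $N=B_L$ and instead varies a \emph{dilation} parameter. From the $L^1$ lower bound one extracts a set $R_L=M_L^{(i_L)}$ of measure $\geq B^{-(1-\tau+5\varepsilon)L}$ on which a discretized version of $|S_{B_L}|$ is large, and Theorem~\ref{th_4} shows that for a.e.\ $\alpha$ there are infinitely many $h_L\leq(1+\eta)^L/\mathbb{P}(R_L)$ with $\{h_L\alpha\}\in R_L$. The quasi-independence needed there is not among the events $E_k$ in $N$, but among the dilated functions $\mathbf{1}_{R_L}(h\alpha)$, $h\leq H_L$; it is established by passing to Fourier coefficients and invoking Hilberdink's bound for GCD sums, after which the \emph{first} Borel--Cantelli lemma (applied to the fluctuation of $\sum_h\mathbb{I}_L(h\alpha)$ around its divergent mean $H_L\mathbb{P}(R_L)$) gives the a.e.\ statement. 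The dilation is then absorbed by the Koksma inequality in the form~\eqref{co1*}, $ND_N\geq\frac{1}{4H}|\sum_{n\leq N}e^{2\pi iHa_n\alpha}|$ --- note that your version of Koksma omits the dilation $H$, which is harmless for your plan but essential for the paper's. If you want to salvage your outline, you would need to replace the Kochen--Stone step by this dilation argument (or supply a genuinely new decorrelation estimate for the events $E_k$ that beats the polynomial losses above); as written, the proof does not close. For the record, the paper's own proof of the Corollary is a one-line specialization of Theorem~\ref{th_1} to $m_n=n$.
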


The same estimate for example also holds if we choose $m_n = p_n$, the $n-$th prime, or  $m_n = [n \beta ]$ for some fixed $\beta > 1$.\\ 

Theorem~\ref{th_1} is a consequence of the more general

\begin{theorem} \label{th_2}
Let $f:\mathbb{N} \rightarrow\mathbb{Z}$ be a function with
$\left|f\left(n\right)\right| \leq n^{t}$ for some $t \in \mathbb{N}$ and all $n
\geq n\left(t\right)$. Set
$$
A_{f} (n) := \left\{\left(x,y\right) \in \mathbb{N} \times \mathbb{N} \left|
\right. f(x) + f(y) = n \right\}
$$
and
$$
\tilde{A}_{f} (n) := \left\{\left(x,y\right) \in \mathbb{N} \times \mathbb{N} \left|
\right. f(x) - f(y) = n \right\}
$$
and assume that we have $A_{f} (n) = \mathcal{O} \left(|n|^{\varepsilon}\right)$ as $|n| \to
\infty$ or $\tilde{A}_{f} (n) = \mathcal{O} \left(|n|^{\varepsilon}\right)$ as $n \to
\infty$ for all $\varepsilon >0$. Then for almost all $\alpha$ for the
discrepancy of the sequence $\left(\left\{f(n) \alpha\right\}\right)_{n \geq 1}$
we have
$$
ND_{N} \geq N^{\frac{1}{2}-\varepsilon}
$$
for all $\varepsilon > 0$ and for infinitely many $N$.
\end{theorem}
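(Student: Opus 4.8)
The plan is to bound the discrepancy from below by the first nontrivial Weyl sum and then to show that this sum is large infinitely often. By Koksma's inequality, applied to the function $x\mapsto e^{2\pi i x}$ (which has bounded variation on $[0,1]$ and mean zero), one has
\[
N D_N \;\gg\; \bigl|\,S_N(\alpha)\,\bigr|,\qquad S_N(\alpha):=\sum_{n=1}^{N} e^{2\pi i f(n)\alpha},
\]
so it suffices to prove that for every fixed $\varepsilon>0$ and for almost every $\alpha$ we have $|S_N(\alpha)|\ge N^{1/2-\varepsilon}$ for infinitely many $N$; intersecting over a sequence $\varepsilon\downarrow 0$ then gives the theorem. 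Since enlarging $\varepsilon$ only weakens the assertion, we may assume $\varepsilon$ small in terms of $t$.

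Fix an increasing sequence of integers $N_0<N_1<N_2<\dots$ (to be chosen below), put $I_k:=(N_{k-1},N_k]$, and consider the block sums $T_k(\alpha):=\sum_{n\in I_k}e^{2\pi i f(n)\alpha}$. Expanding and using $\int_0^1 e^{2\pi i m\alpha}\,d\alpha=\mathbf 1_{\{m=0\}}$, the diagonal alone gives $\int_0^1 |T_k(\alpha)|^2\,d\alpha\ge |I_k|$, while
\[
\int_0^1 |T_k(\alpha)|^4\,d\alpha \;=\; \#\bigl\{(a,b,c,d)\in I_k^4:\ f(a)+f(c)=f(b)+f(d)\bigr\}.
\]
This is where the hypothesis enters. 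Writing the last quantity as $\sum_{v}\bigl(\#\{(a,b)\in I_k^2: f(a)+f(b)=v\}\bigr)^2$ (respectively, grouping $a$ with $b$ and using $\tilde A_f$), and noting that by $|f(n)|\le n^t$ all relevant values satisfy $|v|\le 2N_k^t$, the assumption $A_f(v)=\mathcal O(|v|^{\delta})$ (resp.\ $\tilde A_f(v)=\mathcal O(|v|^{\delta})$) for every $\delta>0$ yields
\[
\int_0^1 |T_k(\alpha)|^4\,d\alpha \;\ll_{\delta}\; N_k^{t\delta}\,|I_k|^2 \qquad\text{for every }\delta>0;
\]
in the $\tilde A_f$ case a short shift argument shows in addition that $\#\{(a,b)\in I_k^2: f(a)=f(b)\}\ll_{\delta}N_k^{t\delta}|I_k|$, so the value $v=0$ does not disturb the bound (the degenerate case in which $f$ is essentially constant on $I_k$ is even more favourable and is treated separately). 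The key feature is that the exponent of $N_k$ here can be taken arbitrarily small.

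From $\int |T_k|^2\ge |I_k|$ and $\int |T_k|^4\ll_\delta N_k^{t\delta}|I_k|^2$ the Paley--Zygmund inequality shows that
\[
E_k:=\Bigl\{\alpha\in[0,1]:\ |T_k(\alpha)|\ge \tfrac12\sqrt{|I_k|}\Bigr\}
\]
has measure $\gg_\delta N_k^{-t\delta}$. Since $T_k=S_{N_k}-S_{N_{k-1}}$, on $E_k$ we have $\max\bigl(|S_{N_{k-1}}(\alpha)|,|S_{N_k}(\alpha)|\bigr)\ge \tfrac14\sqrt{|I_k|}$; choosing $\delta$ small compared to $\varepsilon$ and the $N_k$ so that $|I_k|\ge N_k^{1-2\varepsilon}$ --- for instance $N_k\asymp k^{1/(t\delta)}$, in which case the blocks tile $\N$, $|I_k|\asymp N_k^{1-t\delta}\ge N_k^{1-2\varepsilon}$, and $\sum_k|E_k|\gg\sum_k N_k^{-t\delta}\asymp\sum_k 1/k=\infty$ --- this gives $N D_N\ge N^{1/2-2\varepsilon}$ for some $N\in\{N_{k-1},N_k\}$ whenever $\alpha\in E_k$ and $k$ is large.

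It remains to show that for almost every $\alpha$ the events $E_k$ occur for infinitely many $k$. Since $\sum_k |E_k|=\infty$, this follows from the divergence part of the Borel--Cantelli lemma once a sufficient independence of the $E_k$ is established, i.e.\ a Kochen--Stone type bound $\sum_{j,k\le n}|E_j\cap E_k|\ll\bigl(\sum_{k\le n}|E_k|\bigr)^2$. Such a bound is to be obtained by estimating the mixed moments $\int T_j\overline{T_k}\,d\alpha$ and $\int |T_j|^2|T_k|^2\,d\alpha$ for $j<k$, again through the hypothesis on $A_f$ (resp.\ $\tilde A_f$) together with the fact that the value ranges $\{f(n):n\in I_j\}$ and $\{f(n):n\in I_k\}$ become more and more separated as $k$ grows. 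This quasi-independence step, and its delicate compatibility with the requirement $\sum_k|E_k|=\infty$ (which forces the $N_k$ to grow slowly, whereas independence pushes them to grow fast), is the main difficulty of the proof: it requires choosing the block lengths and positions more carefully than the naive tiling above, and a fourth-moment estimate sharper than the crude bound $N_k^{t\delta}|I_k|^2$ (reflecting that the true additive energy of $\{f(n):n\in I_k\}$ is only $|I_k|^{2+o(1)}$). Once this is carried out, the divergence Borel--Cantelli lemma yields the conclusion, and intersecting over $\varepsilon\downarrow 0$ completes the proof.
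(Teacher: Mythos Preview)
Your proposal has a genuine gap at the most critical point. You correctly extract the moment information (the $L^2$ and $L^4$ bounds on $T_k$ from the hypothesis on $A_f$ or $\tilde A_f$, via the same computation the paper uses), and Paley--Zygmund then gives $|E_k|\gg N_k^{-t\delta}$ with $\sum_k|E_k|=\infty$. But the divergence Borel--Cantelli step is not carried out: you write that the quasi-independence bound $\sum_{j,k\le n}|E_j\cap E_k|\ll(\sum_{k\le n}|E_k|)^2$ ``is to be obtained'' from mixed moments and from the value ranges $\{f(n):n\in I_j\}$, $\{f(n):n\in I_k\}$ becoming separated. This last claim is not justified by the hypotheses: only $|f(n)|\le n^t$ is assumed, and $f$ need not be monotone (think of $f(n)=P(m_n)$ with an arbitrary permutation $(m_n)$, which is exactly the situation of Theorem~1), so the value sets for different blocks can be arbitrarily intertwined. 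Even with the moment bounds you state, passing from $\int|T_j|^2|T_k|^2$ to control of $|E_j\cap E_k|$ for \emph{level sets} is not automatic, and you yourself note that the step ``requires choosing the block lengths \dots\ more carefully'' and ``a fourth-moment estimate sharper than the crude bound'' --- none of which is supplied. The paper's introduction in fact singles out precisely this obstacle: metric lower bounds ``cannot be directly deduced from the second Borel--Cantelli lemma, which requires independence''.

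The paper bypasses this difficulty by a different mechanism. It does not try to show $|S_N(\alpha)|\ge N^{1/2-\varepsilon}$ directly; instead it uses the dilation inequality $ND_N\ge\frac{1}{4H}\bigl|\sum_{n\le N}e^{2\pi i H a_n\alpha}\bigr|$ valid for \emph{every} integer $H$. Your $L^2/L^4$ computation is used only to produce an $L^1$ lower bound $\int_0^1|S_N|\,d\alpha\ge N^{1/2-\varepsilon}$ (via H\"older, $\int|S|\ge(\int|S|^2)^{3/2}/(\int|S|^4)^{1/2}$). This $L^1$ bound yields, for each $L$, a set $R_L\subset[0,1)$ of measure $\ge B^{-L}$ on which the sum is large. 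The quasi-independence is then obtained not between different $N$'s, but between the \emph{dilates} $\{h\alpha\}$ for $h=1,2,\dots$: one shows (Theorem~4) that for almost every $\alpha$ there are infinitely many $h_L\le(1+\eta)^L/\mathbb P(R_L)$ with $\{h_L\alpha\}\in R_L$, and the correlation estimate needed for this is a GCD-sum bound of Hilberdink on $\sum_{j_1,j_2}|u_{j_1}u_{j_2}|\gcd(j_1,j_2)/\sqrt{j_1j_2}$. The extra averaging parameter $h$ is what makes the Borel--Cantelli argument go through; without it, the independence you need among the $E_k$ is not available under the stated hypotheses.
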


Theorem~\ref{th_2} is a consequence of the following result which implicitly
already was used in~\cite{AHL}, and which shows that under certain conditions on
$\left(a_{n}\right)_{n \geq 1}$ the metric behavior of the discrepancy of
$\left(\left\{a_{n} \alpha\right\}\right)_{n \geq 1}$ is determined by the
L1-norm of the exponential sums in $a_{n}\alpha$.

\begin{theorem} \label{th_3}
Let $\left(a_{n}\right)_{n \geq 1}$ be a sequence of integers such that for some
$t \in \mathbb{N}$ we have $\left|a_{n} \right|\leq n^{t}$ for all $n$ large
enough. Assume there exist a number $\tau \in \left(0,1\right)$ and a strictly
increasing sequence $\left(B_{L}\right)_{L \geq1}$ of positive integers with
$\left(B'\right)^{L} \leq B_{L} \leq B^{L}$ for some reals $B', B$ with $1 < B'
< B$, such that for all $\varepsilon >0$ and all $L > L\left(\varepsilon
\right)$ we have

\begin{equation} \label{equ_a}
\int^{1}_{0} \left|\sum^{B_{L}}_{n=1} e^{2 \pi i a_{n} \alpha}\right|d \alpha >
B^{\tau-\varepsilon}_{L}.
\end{equation}
Then for almost all $\alpha \in \left[ \left. 0,1\right.\right)$ for all
$\varepsilon > 0$ for the discrepancy $D_{N}$ of the sequence
$\left(\left\{a_{n} \alpha\right\}\right)_{n \geq 1}$ we have
$$N D_{N} > N^{\tau -\varepsilon}$$
for infinitely many $N$.
\end{theorem}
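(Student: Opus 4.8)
The plan is to transfer the lower bound on the $L^1$-norm of the exponential sum into a lower bound on the discrepancy via Koksma's inequality, used in the ``wrong'' direction, combined with a Borel--Cantelli / measure-theoretic argument to make the statement hold for almost all $\alpha$. First I would fix, for each $L$, the single frequency $h=1$ and recall that by Koksma's inequality (or rather the elementary bound relating a single exponential sum to the discrepancy) we have
$$
\left|\sum_{n=1}^{B_L} e^{2\pi i a_n \alpha}\right| \leq 2\pi\, B_L\, D_{B_L}(\alpha),
$$
where $D_{B_L}(\alpha)$ is the discrepancy of $(\{a_n\alpha\})_{1\le n\le B_L}$. Hence for \emph{every} $\alpha$,
$$
D_{B_L}(\alpha) \;\geq\; \frac{1}{2\pi B_L}\left|\sum_{n=1}^{B_L} e^{2\pi i a_n \alpha}\right|.
$$
Integrating and using~\eqref{equ_a} gives $\int_0^1 D_{B_L}(\alpha)\,d\alpha > \frac{1}{2\pi} B_L^{\tau-1-\varepsilon}$ for all large $L$. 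So on average over $\alpha$ the discrepancy at time $N=B_L$ is at least of order $B_L^{\tau-1-\varepsilon}$; the task is to upgrade this ``in the mean'' statement to a statement that holds pointwise for a.e.\ $\alpha$ for infinitely many $L$.

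The second step is the measure-theoretic upgrade. Let $E_L = \{\alpha \in [0,1): B_L D_{B_L}(\alpha) \le B_L^{\tau - 2\varepsilon}\}$ be the ``bad'' set at scale $L$; I want to show $\sum_L |E_L| < \infty$ is \emph{not} quite the right thing (that would only control finitely many bad $L$), so instead I would argue that the set of $\alpha$ for which $B_L D_{B_L}(\alpha) > B_L^{\tau-2\varepsilon}$ holds for only finitely many $L$ has measure zero. The lower bound $\int_0^1 B_L D_{B_L}\,d\alpha > B_L^{\tau-\varepsilon}$ together with the trivial upper bound $B_L D_{B_L}(\alpha) \le B_L$ forces, by a first-moment / reverse-Markov argument, that $|\{\alpha : B_L D_{B_L}(\alpha) > \tfrac12 B_L^{\tau-\varepsilon}\}| \gg B_L^{-\varepsilon}$ for all large $L$. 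Since the $B_L$ grow geometrically ($B_L \le B^L$), the quantity $B_L^{-\varepsilon}$ is summable only if we thin out, so one must be more careful: the standard device (used implicitly in~\cite{AHL}) is to note that $D_{B_L}(\alpha)$ and $D_{B_{L'}}(\alpha)$ become asymptotically independent as $|L-L'|\to\infty$ because the relevant frequency blocks in the Fourier expansion of the indicator functions are essentially disjoint, given $(B')^L \le B_L \le B^L$. Concretely, one expands $D_N$ via its Fourier series, localizes to a short frequency range near $h=1$, shows the resulting random variables over $\alpha \in [0,1)$ satisfy a quasi-independence estimate (bounded covariances, by evaluating Gauss-type sums $\int_0^1 e^{2\pi i (a_m - a_n)\alpha}\,d\alpha = \mathbf{1}_{a_m = a_n}$), and then applies a divergence Borel--Cantelli lemma (in the Erd\H{o}s--R\'enyi / Kolmogorov form for quasi-independent events) to conclude that a.e.\ $\alpha$ lies in infinitely many of the sets $\{\alpha : B_L D_{B_L}(\alpha) > B_L^{\tau-2\varepsilon}\}$.

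The third step is bookkeeping over $\varepsilon$: run the above for a sequence $\varepsilon_k \downarrow 0$, take the countable intersection of the corresponding full-measure sets, and conclude that for a.e.\ $\alpha$ and every $\varepsilon>0$ there are infinitely many $N$ (of the form $B_L$) with $N D_N(\alpha) > N^{\tau-\varepsilon}$, which is exactly the assertion. The main obstacle is the quasi-independence estimate in step two: one has to show that the events at geometrically spaced scales $B_L$ are sufficiently decorrelated that the divergence of $\sum_L B_L^{-\varepsilon_k}$ (over a suitably chosen subsequence of $L$'s, say $L$ running over multiples of a large constant $L_0=L_0(\varepsilon_k, B, B')$ so that the frequency supports are genuinely separated) is enough to force infinitely many hits. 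This is where the hypothesis $(B')^L \le B_L \le B^L$ is used in an essential way — it guarantees that by spacing the scales we can make the ``carry-over'' Fourier mass between consecutive scales negligible. Everything else (Koksma's inequality, the reverse-Markov first-moment bound, the diagonalization over $\varepsilon$) is routine. I would also remark that, since~\cite{AHL} already contains this argument implicitly, the cleanest writeup is to isolate the quasi-independent Borel--Cantelli statement as a standalone lemma and cite or reproduce its short proof.
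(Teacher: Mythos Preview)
Your proposal has a genuine gap at the second step. First a quantitative slip: reverse Markov from $\int_0^1 B_L D_{B_L}\,d\alpha > B_L^{\tau-\varepsilon}$ together with the trivial bound $B_L D_{B_L}\le B_L$ gives only
\[
\big|\{\alpha : B_L D_{B_L}(\alpha) > \tfrac12 B_L^{\tau-\varepsilon}\}\big| \;\gg\; B_L^{\tau-1-\varepsilon},
\]
not $B_L^{-\varepsilon}$. Since $\tau<1$ and $B_L\ge (B')^L$, these measures form a \emph{convergent} series, so even perfect independence of the events over $L$ would not trigger the divergence Borel--Cantelli lemma; thinning the scales only makes the sum smaller. (Paley--Zygmund with $\|f_L\|_2^2\asymp B_L$ improves the bound to $B_L^{2\tau-1-2\varepsilon}$, still summable for $\tau\le 1/2$, the case of principal interest.) Moreover, the quasi-independence you invoke over $L$ is not available: the exponential sum at scale $B_{L'}$ contains the one at scale $B_L$ as a subsum for $L'>L$, the frequency sets $\{a_n:n\le B_L\}$ are nested rather than disjoint, and $D_{B_L}(\alpha)$, $D_{B_{L'}}(\alpha)$ are strongly positively correlated.

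The paper avoids both obstacles by \emph{not} fixing $h=1$. Koksma--Hlawka in fact gives $B_L D_{B_L}(\alpha)\ge \frac{1}{4H}\big|\sum_{n\le B_L}e^{2\pi i H a_n\alpha}\big|$ for every positive integer $H$, so it is enough, for each $L$, to find \emph{some} $h_L$ (not too large) with $\{h_L\alpha\}$ lying in a level set where $f_L(\beta)=\big|\sum_{n\le B_L}e^{2\pi i a_n\beta}\big|$ is large. After discretizing $f_L$ to a step function on roughly $(B^{1+t})^L$ intervals and pigeonholing over $O(1/\varepsilon)$ dyadic level sets, one obtains a set $R_L\subset[0,1)$ that is a union of at most $(B^{1+t})^L$ intervals, has measure $\ge B_L^{-(1-\tau)-O(\varepsilon)}$, and on which $f_L$ is large. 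Theorem~\ref{th_4} then supplies, for almost every $\alpha$ and infinitely many $L$, an integer $h_L\le (1+\eta)^L/\mathbb{P}(R_L)$ with $\{h_L\alpha\}\in R_L$. The quasi-independence that drives this is over the dilation parameter $h$ for fixed $L$, not over $L$; it is encoded in the GCD-sum bound controlling $\big\|\sum_{h\le H_L}\mathbb{I}_L(h\,\cdot\,)\big\|_2$ inside the proof of Theorem~\ref{th_4}. That is where the substantive work lies, and it is precisely the mechanism your sketch is missing.
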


For the proof of Theorem~\ref{th_3} we need an auxiliary result from metric
Diophantine approximation which is of some interest on its own. We state this
result as Theorem~\ref{th_4}. In the statement of the theorem and in the sequel,
$\mathbb{P}$ denotes the one-dimensional Lebesgue measure.

\begin{theorem} \label{th_4}
Let $\left(R_{L}\right)_{L \geq 0}$ be a sequence of measurable subsets of
$\left[ \left.0,1\right)\right.$, with $\mathbb{P} \left(R_{L}\right) \geq
\frac{1}{B^{L}}$ for some constant $B \in \mathbb{R}^{+}$  and such that each
$R_{L}$ is the disjoint union of at most $A^{L}$ intervals for some $A \in \mathbb{R}^{+}$.
Then for almost all $\alpha \in \left[\left. 0,1\right)\right.$ for every
$\eta>0$ there are infinitely many integers $h_{L}$ with
$$h_{L} \leq \left(1 +\eta\right)^{L}\frac{1}{\mathbb{P} \left(R_{L}\right)}$$
and
$$\left\{h_{L} \alpha \right\} \in R_{L}.$$
\end{theorem}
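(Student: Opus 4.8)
The plan is to set up a Borel--Cantelli argument for the events
$E_L = \{\alpha : \{h_L \alpha\} \in R_L \text{ for some admissible } h_L\}$,
where "admissible" means $h_L \le (1+\eta)^L / \p(R_L)$. The naive first moment is too weak: for a single $h$ the set $\{\alpha : \{h\alpha\}\in R_L\}$ has measure exactly $\p(R_L)$, so summing over $h \le (1+\eta)^L/\p(R_L)$ gives total measure roughly $(1+\eta)^L$, which is $\gg 1$ but carries no Borel--Cantelli information on its own because we must control overlaps. The real work is therefore a \emph{second moment} (or quasi-independence) estimate: I will show that for a suitable range of $h$, the sets $\{\alpha : \{h\alpha\}\in R_L\}$ behave almost independently, so that the measure of $E_L$ itself is bounded below by a constant, and then one upgrades "constant lower bound for infinitely many $L$" to "happens for almost every $\alpha$" by exploiting that the $R_L$ can be taken along a subsequence and that the relevant events become genuinely independent in blocks.

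First I would reduce to the case where $R_L$ is a single interval, or more precisely handle the Fourier side directly. Write $\mathbf{1}_{R_L}$ via its Fourier expansion; because $R_L$ is a union of at most $A^L$ intervals, its Fourier coefficients $\widehat{\mathbf 1_{R_L}}(k)$ satisfy $|\widehat{\mathbf 1_{R_L}}(k)| \le \min(\p(R_L), A^L/|k|)$ — this is the key structural input that the interval count is only exponential, not arbitrary. Now for a set $H_L$ of integers $h$ (to be chosen of size about $1/\p(R_L)$, thinned to be a geometric-type or dissociated set so that the sums $\sum \pm h$ are distinct), consider $F_L(\alpha) = \sum_{h\in H_L} \mathbf 1_{R_L}(\{h\alpha\})$. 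Then $\int_0^1 F_L = |H_L|\,\p(R_L) \asymp 1$, and $\int_0^1 F_L^2 = \sum_{h,h'\in H_L}\sum_{k}\widehat{\mathbf 1_{R_L}}(k)\overline{\widehat{\mathbf 1_{R_L}}(k')}$ with the constraint $kh = k'h'$. The diagonal-ish terms give $|H_L|\,\p(R_L)^2$ plus the square of the mean; the off-diagonal terms are where the bound $|\widehat{\mathbf 1_{R_L}}(k)|\le A^L/|k|$ together with the divisor-type control coming from $H_L$ being dissociated keeps the contribution down to $O((\log \text{stuff})^{O(1)})$ times the main term. Paley--Zygmund then yields $\p(F_L > c) \ge c'$ for absolute constants, i.e. $\p(E_L)\ge c' > 0$ for infinitely many $L$ — after possibly passing to a subsequence of $L$'s, since we only need infinitely many $h_L$'s across all $L$.

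To pass from positive probability to full measure, I would pick a rapidly growing subsequence $L_1 < L_2 < \cdots$ and choose, for each $j$, the set $H_{L_j}$ of candidate multipliers to lie in a frequency window disjoint from those used for other $j$'s (possible because $h \le (1+\eta)^{L}/\p(R_L)$ leaves an exponentially large window and we only need $h$ of size $\asymp 1/\p(R_L)$, so the slack factor $(1+\eta)^L$ is precisely what lets us place the $H_{L_j}$ in separated dyadic ranges). Then the events $E_{L_j}$, or rather slightly modified events defined by band-limiting $\mathbf 1_{R_{L_j}}$ to a controlled frequency range, become independent (or quasi-independent with summable correlation), so $\sum_j \p(E_{L_j}) = \infty$ combined with (quasi-)independence gives, by the second Borel--Cantelli lemma, that almost every $\alpha$ lies in infinitely many $E_{L_j}$. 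A minor technical point: band-limiting $\mathbf 1_{R_L}$ truncates the interval indicator, so one must either allow a slightly enlarged $R_L$ in the definition of the event or absorb the tail $\sum_{|k|>K}|\widehat{\mathbf 1_{R_L}}(k)| \le A^L/K$ by taking $K$ a large enough power of $A^L B^L$; this is where the hypotheses $\p(R_L)\ge B^{-L}$ and "at most $A^L$ intervals" are both consumed.

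The main obstacle I expect is the off-diagonal estimate in the second moment: bounding $\sum_{h\ne h' \in H_L}\ \sum_{k,k':\,kh=k'h'} \widehat{\mathbf 1_{R_L}}(k)\overline{\widehat{\mathbf 1_{R_L}}(k')}$ uniformly in $L$. The solution is to choose $H_L$ not as a full interval of integers but as a \emph{lacunary} or dissociated set (e.g. $h = \lfloor (1+\eta/2)^i \rfloor$ for a range of $i$), so that the equation $kh = k'h'$ with $h,h'\in H_L$ forces $k/k'$ to be close to a ratio of two widely separated geometric terms, severely limiting the number of solutions; combined with $|\widehat{\mathbf 1_{R_L}}(k)|\le\min(\p(R_L), A^L/|k|)$ this caps the off-diagonal sum by a polynomial-in-$L$ factor, which is harmless after passing to a sufficiently sparse subsequence $(L_j)$. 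Everything else — Paley--Zygmund, Borel--Cantelli, the bookkeeping of the $(1+\eta)^L$ slack — is routine once this estimate is in place.
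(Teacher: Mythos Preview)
Your second-moment scaffolding is the right instinct, but the proposal has a genuine gap precisely at the step you flag as the main obstacle. You want $H_L$ to be lacunary or dissociated so that the off-diagonal equation $kh=k'h'$ has few solutions, \emph{and} to have $|H_L|\asymp 1/\p(R_L)$ so that the first moment $|H_L|\,\p(R_L)$ is bounded below. These two requirements are incompatible: any lacunary or dissociated subset of $\bigl[1,(1+\eta)^L/\p(R_L)\bigr]\subset\bigl[1,((1+\eta)B)^L\bigr]$ has cardinality $O(L)$, whereas $1/\p(R_L)$ may be as large as $B^L$. After thinning you are left with $\int_0^1 F_L = |H_L|\,\p(R_L)=O(LB^{-L})\to 0$, and Paley--Zygmund yields nothing. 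There is no intermediate choice of $H_L$ that is simultaneously dense enough for the first moment and structured enough to trivialise the off-diagonal; the off-diagonal sum is genuinely a divisor/GCD problem and cannot be sidestepped by thinning.

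The paper resolves this tension by keeping the \emph{full} range $h=1,\dots,H_L:=\lfloor(1+\eta)^L/\p(R_L)\rfloor$ and confronting the off-diagonal directly. After truncating the Fourier series of the centred indicator $\mathbb{I}_L$ at height $G_L=(AB(1+\eta))^{2L}$ (the tail being controlled by your bound $|\widehat{\mathbf 1_{R_L}}(j)|\le A^L/|j|$), one has $\#\{(h_1,h_2):j_1h_1=j_2h_2,\ h_1,h_2\le H_L\}\le H_L\gcd(j_1,j_2)/\sqrt{j_1j_2}$, and the second moment becomes the GCD sum
\[
\Bigl\|\sum_{h\le H_L}\mathbb{I}_L(h\,\cdot)\Bigr\|_2^2 \;\le\; H_L\sum_{j_1,j_2\le G_L}|u_{j_1}u_{j_2}|\,\frac{\gcd(j_1,j_2)}{\sqrt{j_1j_2}}.
\]
The decisive external input is Hilberdink's sharp bound for such sums, giving the right-hand side $\ll H_L\,\exp\bigl(c\sqrt{\log G_L/\log\log G_L}\bigr)\,\|\mathbb{I}_L\|_2^2 \ll (1+\eta)^L\exp\bigl(c'\sqrt{L/\log L}\bigr)$. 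The closing argument is then also simpler than yours: since the mean $H_L\p(R_L)\asymp(1+\eta)^L$ dominates the standard deviation $(1+\eta)^{L/2+o(1)}$, Chebyshev's inequality together with the \emph{first} Borel--Cantelli lemma (applied to the exceptional sets) shows that for almost every $\alpha$ one has $\sum_{h\le H_L}1_L(h\alpha)\gg(1+\eta)^L>0$ for \emph{all} large $L$. No Paley--Zygmund, no second Borel--Cantelli, and no separate upgrade from positive to full measure are required.
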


Theorem~\ref{th_1} seduces to the hypothesis that a polynomial growth behavior
of $\left(a_{n}\right)_{n \geq 1}$ of degree at least 2 always implies a metric
lower bound of order $N^{\frac{1}{2}-\varepsilon}$ for the discrepancy of
$\left(\left\{a_{n} \alpha\right\}\right)_{n \geq 1}$. Here by polynomial growth
behavior of degree $d$ we mean not only that $a_{n} \geq c n^{d}$ for some $c >
0$ and all $n$ large enough, but that the stronger local condition
$$
\frac{a_{n+1}}{a_{n}} > 1 + \frac{c}{{a_{n}}^{\frac{1}{d}}}
$$
holds for some $c >0$ and all $n$ large enough.\footnote{Note that for $a_{n} =
n^{d}$ we have
$$
1+\frac{d}{{a_{n}}^{\frac{1}{d}}} < \frac{a_{n+1}}{a_{n}} < 1 +
\frac{d+1}{{a_{n}}^{\frac{1}{d}}}
$$
for all $n$ large  enough.} However, this hypothesis does not hold as is shown
in the following Theorem~\ref{th_5}.

\begin{theorem} \label{th_5}
For every integer $d \geq 1$ there is a strictly increasing sequence
$\left(a_{n}\right)_{n \geq 1}$ of integers with
$$
\frac{a_{n+1}}{a_{n}} > 1 + \frac{c}{{a_{n}}^{\frac{1}{d}}}
$$
for some $c >0$ and all $n \in \mathbb{N}$ such that for almost all $\alpha$ the
discrepancy $D_{N}$ of the sequence $\left(\left\{a_{n} \alpha\right\}\right)_{n
\geq 1}$ satisfies
$$
ND_{N} = \mathcal{O} \left(\left(\log N\right)^{2+\varepsilon}\right)
$$
for all $\varepsilon > 0$.
\end{theorem}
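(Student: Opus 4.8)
The plan is to construct $(a_n)_{n\ge 1}$ as an increasing concatenation of finite arithmetic progressions (``blocks'') whose common differences grow so rapidly that, eventually, one single block carries all but a vanishing fraction of the first $N$ terms, and then to estimate the discrepancy of $\left(\{a_n\alpha\}\right)_{n\ge 1}$ block by block, each block being a shifted Kronecker sequence. For $d=1$ one may simply take $a_n=n$: then $a_{n+1}/a_n=1+a_n^{-1}>1+\tfrac12a_n^{-1}$ and the discrepancy even satisfies~\eqref{kh1}; so assume $d\ge 2$. I would set $V_k=2^{2^k}$ and $\rho_k=\left\lceil V_{k+1}^{1-1/d}\right\rceil$, and let block $k$ be the arithmetic progression with common difference $\rho_k$ whose first term $S_k$ is the least multiple of $\rho_k$ exceeding $E_{k-1}+\rho_k$ (where $E_{k-1}$ is the last term of block $k-1$), continued with $\ell_k$ terms, $\ell_k$ being the largest integer with $S_k+(\ell_k-1)\rho_k<V_{k+1}$; the finitely many terms before block $1$ are chosen by hand so that the recursion, and the local growth bound, start correctly. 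One checks that $\ell_k\asymp V_{k+1}/\rho_k\asymp V_{k+1}^{1/d}=2^{2^{k+1}/d}$, so that $\log\ell_k$ grows geometrically in $k$ and $\sum_{j<k}\ell_j=o(\ell_k)$. Since $d\ge 2$, within a block $a_{n+1}-a_n=\rho_k\ge V_{k+1}^{1-1/d}>a_n^{1-1/d}$ (because $a_n<V_{k+1}$), and at a junction $a_{n+1}-a_n>\rho_{k+1}\ge V_{k+2}^{1-1/d}>a_n^{1-1/d}$; hence $a_{n+1}/a_n>1+a_n^{-1/d}$ for every $n$, i.e.\ the local condition holds with $c=\tfrac12$.

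For the discrepancy, fix $N$ and let $k$ be the index of the block containing the $N$-th term, so that the first $N$ terms are the complete blocks $1,\dots,k-1$ together with an initial segment of block $k$. The elementary estimate for the discrepancy of a disjoint union of point sets gives $ND_N\le\sum_{j=1}^{k}L_j D^{(j)}$, where $L_j$ (which is $\ell_j$ for $j<k$ and at most $\ell_k$ for $j=k$) is the number of terms of block $j$ among the first $N$ and $D^{(j)}$ is the discrepancy of these points, a sequence of the form $\left(\{\gamma_j+i\theta_j\}\right)_i$ with $\theta_j=\{\rho_j\alpha\}$ and $\gamma_j=\{S_j\alpha\}$. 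By the classical bound for the discrepancy of $\left(\{\gamma+i\theta\}\right)_{1\le i\le L}$ in terms of the continued fraction expansion $\theta=[0;b_1(\theta),b_2(\theta),\dots]$ --- namely $O\!\left(L^{-1}\sum_{m:\,q_m(\theta)\le L}b_{m+1}(\theta)\right)$, uniformly in the shift $\gamma$ --- together with the fact that the convergent denominators $q_m(\theta)$ grow at least as fast as the Fibonacci numbers (so that only $O(\log L)$ indices $m$ occur), one obtains, for every irrational $\alpha$ and all $j\le k$,
$$L_j D^{(j)}\ \le\ C\sum_{m\le C_2\log\ell_j}b_{m+1}\!\left(\{\rho_j\alpha\}\right)$$
with absolute constants $C,C_2$.

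It therefore remains to control the partial quotients $b_{m+1}(\{\rho_j\alpha\})$ for $m\le C_2\log\ell_j$. Here I would use that, since each $\rho_j$ is a positive integer, the map $\alpha\mapsto\{\rho_j\alpha\}$ preserves Lebesgue measure on $\left[0,1\right)$; hence, under a uniformly distributed $\alpha$, the partial quotients of $\{\rho_j\alpha\}$ have the same joint law as those of a uniformly distributed real, and in particular $\mathbb{P}\!\left(b_{m+1}(\{\rho_j\alpha\})>T\right)\le C_0/T$ for an absolute $C_0$, uniformly in $m$ and $j$. Consequently, for each fixed $\varepsilon>0$,
$$\mathbb{P}\!\left(\exists\,m\le C_2\log\ell_j:\ b_{m+1}(\{\rho_j\alpha\})>(\log\ell_j)^{1+\varepsilon}\right)\ \le\ \frac{C_3}{(\log\ell_j)^{\varepsilon}},$$
and the right-hand side is summable over $j$ exactly because $\log\ell_j\asymp 2^{j+1}/d$ grows geometrically. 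By Borel--Cantelli, for almost all $\alpha$ there is $j_0=j_0(\alpha,\varepsilon)$ with $b_{m+1}(\{\rho_j\alpha\})\le(\log\ell_j)^{1+\varepsilon}$ for all $j\ge j_0$ and all $m\le C_2\log\ell_j$; intersecting over a sequence $\varepsilon\to 0$ makes this hold for every $\varepsilon>0$ at once.

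Putting the last two displays together, for almost all $\alpha$ and every $\varepsilon>0$ one has $L_j D^{(j)}=O\!\left((\log\ell_j)^{2+\varepsilon}\right)$ for $j\ge j_0$; as $\log\ell_j$ grows geometrically, $\sum_{j\le k}L_j D^{(j)}$ is dominated by its last term, so $ND_N=O\!\left((\log\ell_k)^{2+\varepsilon}\right)$, and since the last block accounts for all but a vanishing fraction of the first $N$ terms, $\log N\asymp\log\ell_k$, whence $ND_N=O\!\left((\log N)^{2+\varepsilon}\right)$, which is the assertion. The one delicate point is the super-geometric growth of the block lengths. With geometrically spaced boundaries $V_k=2^k$ --- which would, incidentally, produce the cleaner growth $a_n\asymp n^d$ --- one would have $\log\ell_j\asymp j$, so the Borel--Cantelli series $\sum_j(\log\ell_j)^{-\varepsilon}$ would diverge for $\varepsilon\le 1$, and even if a per-block bound of order $(\log\ell_j)^{2+\varepsilon}$ could still be secured, summing over the $\asymp\log N$ many blocks would only yield $(\log N)^{3+\varepsilon}$. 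Choosing $V_k=2^{2^k}$ cures both defects at once; the remaining ingredients --- the union bound, the continued-fraction discrepancy estimate, the measure-invariance of $\alpha\mapsto\rho_k\alpha$, and the verification of the local growth condition --- are routine.
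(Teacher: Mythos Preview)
Your argument is correct, and the overall architecture---concatenate arithmetic progressions and bound the discrepancy block by block via the continued-fraction estimate for $(\{i\theta\})$---is the same as in the paper. The implementations, however, differ in an interesting way. The paper takes block $k$ to be $\{2^{dk}+j\,2^{dk+d-k}:0\le j<2^k\}$, so the common differences are $2^d\cdot\beta^{k}$ with $\beta=2^{d-1}$ and the block lengths are $2^k$; the resulting double sum $\sum_{k\le k_0}\sum_{m\le 2k_0} b_m(\beta^{k}\,2^d\alpha)$ is then controlled for almost all $\alpha$ by a ready-made lemma (Lemma~2 of~\cite{larch}) that handles the joint behaviour of the partial quotients of the correlated numbers $\beta^k\alpha$ and gives $O(k_0^{2+\varepsilon})=O((\log N)^{2+\varepsilon})$. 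You instead let the block lengths grow doubly exponentially ($\log\ell_k\asymp 2^k$), so that there are only $O(\log\log N)$ blocks; this lets you treat each block in isolation, using nothing beyond the measure-preservation of $\alpha\mapsto\{\rho_k\alpha\}$, the elementary tail bound $\mathbb{P}(b_m>T)\ll 1/T$, a union bound, and Borel--Cantelli. What the paper's route buys is a more explicit sequence and a construction that works with linearly growing $\log\ell_k$, at the price of importing a nontrivial metric lemma about $(\beta^k\alpha)_k$; what your route buys is a completely self-contained argument using only textbook continued-fraction facts, at the price of the doubly exponential spacing---and you correctly identify that with $V_k=2^k$ your Borel--Cantelli series would diverge and the block sum would pick up an extra logarithm. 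One cosmetic remark: your justification ``since the last block accounts for all but a vanishing fraction of the first $N$ terms'' is not quite the reason $\log N\asymp\log\ell_k$ (it fails when $N$ sits at the very start of block $k$); the correct point, which you essentially have, is that $\ell_{k-1}\le N\ll\ell_k$ together with $\log\ell_k\asymp 2\log\ell_{k-1}$ already gives $\log\ell_k\asymp\log N$.
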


We conclude this section with some remarks on our theorems and some open
problems. As noted before, by the Chung--Smirnov law of the iterated logarithm for a
``random'' sequence in the unit interval the quantity $N D_N$ typically is of order
$(N \log \log N)^{1/2}$; thus, by~\eqref{baker} and by Theorems~\ref{th_1}
and~\ref{th_2}, roughly speaking the sequences investigated in the present paper
exhibit nearly random behavior. Our method only allows us to obtain discrepancy
results with an error of order $N^\ve$; it would be very interesting to improve
these error estimates to logarithmic terms, such as those in~\eqref{baker}.
\\

The fact that until now lower bounds have been almost non-existent in metric discrepancy theory comes from the fact that such lower bounds cannot be directly deduced from the second Borel--Cantelli lemma, which requires \emph{independence}. In contrast in the first Borel--Cantelli lemma, which is used to prove upper bounds, only estimates for the size of exceptional sets are required, which can be deduced from simple moment estimates and Markov's inequality. The lower bounds for the discrepancy of lacunary sequences come from an approximation of $(\{a_n \alpha\})_{n \geq 1}$ by an \emph{independent} random system, which is possible because of the fast growth of $(a_n)_{n \geq 1}$. This is not possible for slowly growing $(a_n)_{n \geq 1}$ as in the case of the present paper. Instead we have developed a method which uses estimates for the L1-norm of exponential sum plus an \emph{quasi-independence} property of the dilated functions $(\{h a_n \alpha\})$ for $h=1,2,\dots$. In the present paper the required 
lower bounds for L1-norms are deduced from upper bounds for L4-norms, which can be obtained by simply counting the number of solutions of certain linear equations (see Section~\ref{sect_3}). The quasi-independence property is established using a transition from norm-estimates for sums of dilated functions to certain sums involving greatest common divisors (GCD sums), together with recent strong bounds for such GCD sums (see Section~\ref{sect_2}). We believe that the relevance of this method goes far beyond the applications in the present paper, as the first general method for proving lower bounds in metric discrepancy theory beyond the well-known and totally different methods for lacunary sequences. We also want to point out that these topics are related to a problem in the metric theory of Diophantine approximation posed by LeVeque~\cite{LeVe} which is still unsolved; see the last section of~\cite{AHL} for details. \\

The remaining part of this paper is organized as follows. In
Section~\ref{sect_2} we first prove Theorem~\ref{th_4} and then
Theorem~\ref{th_3}. In Section~\ref{sect_3} we deduce Theorem~\ref{th_2} and
Theorem~\ref{th_1} from Theorem~\ref{th_3}. Finally in Section~\ref{sect_4} we
prove Theorem~\ref{th_5}.

\section{Proofs of Theorem~\ref{th_4} and Theorem~\ref{th_3}} \label{sect_2}

\begin{proof}[Proof of Theorem~\ref{th_4}]

Let $1_{L}\left(\alpha\right)$ denote the indicator function of the set $R_{L}$,
extended with period 1, and $\mathbb{I}_{L}(\alpha) := 1_{L} (\alpha) -
\int^{1}_{0} 1_{L} (\omega) d \omega.$\\

We have 
$$\int^{1}_{0} 1_{L} \left(\alpha\right) d \alpha = \mathbb{P}
\left(R_{L}\right)$$
and
$$\int^{1}_{0} \mathbb{I}_{L}\left(\alpha\right) d \alpha = 0,$$
and, by assumption, for the total variation \textup{Var} $\mathbb{I}_{L}$ of
$\mathbb{I}_{L}$ on $\left[\left. 0,1\right)\right.$ we have
\begin{equation} \label{equ_b}
\textup{Var} ~\mathbb{I}_{L} \leq 2 A^{L}.
\end{equation}

Furthermore we have
\begin{equation} \label{equ_c}
\left\|\mathbb{I}_{L} \right\|^{2}_{2} := \int^{1}_{0} \left(\mathbb{I}_{L}
\left(\alpha\right)\right)^{2} d \alpha= \mathbb{P}
\left(R_{L}\right)\left(1-\mathbb{P} \left(R_{L}\right)\right) \leq \mathbb{P}
\left(R_{L}\right).
\end{equation}

Let $H_{L} := \left\lfloor \left(1+\eta\right)^{L} \frac{1}{\mathbb{P}
\left(R_{L}\right)}\right\rfloor$. Then by definition
\begin{equation} \label{equ_d}
\sum_{h \leq H_{L}} 1_{L} \left(h \alpha\right) = \sum_{h \leq H_{L}}
\int^{1}_{0} 1_{L} (\omega) d \omega + \sum_{h \leq H_{L}} \mathbb{I}_{L}
\left(h \alpha\right),
\end{equation}
and
\begin{equation} \label{equ_e}
\sum_{h \leq H_{L}} \int^{1}_{0} 1_{L} (\omega) d \omega = H_{L} \mathbb{P}
\left(R_{L}\right) \geq \frac{1}{2} \left(1+\eta\right)^{L}
\end{equation}
for $L$ large enough.\\

We write
$$\mathbb{I}_{L} \left(\alpha\right) \sim \sum^{\infty}_{j=1} \left(u_{j}  \cos
2 \pi j \alpha + v_{j}  \sin 2 \pi j \alpha\right)$$
for the Fourier series of $\mathbb{I}_{L}$. From~\eqref{equ_b} and a classical inequality for the size of the Fourier
coefficients of functions of bounded variation (see for example~\cite{zyg}, p.
48) we have
\begin{eqnarray} \label{equ_f}
& & \left|u_{j}\right| \leq \frac{\textup{Var} ~\mathbb{I}_{L}}{2 j} \leq
\frac{A^{L}}{j} ~\quad \mbox{and}\\ \nonumber
& & \left|v_{j}\right| \leq \frac{A^{L}}{j}. 
\end{eqnarray}

We split the function $\mathbb{I}_L$ into an even and an odd part (that is, into
a cosine- and a sine-series). In the sequel, we consider only the even part; the
odd part can be treated in exactly the same way. Set $G_{L} := \left(A B 
\left(1+\eta\right)\right)^{2L}$, where $A$ and $B$ are the constants from the statement of the theorem. Let $p_L(\alpha)$ denote the $G_{L}$-th
partial sum of the Fourier series of the even part of $\mathbb{I}_L$, and let
$r_L(\alpha)$ denote the remainder term (for simplicity of writing we assume that $G_L$ is an integer). Then by Minkowski's inequality we have
\begin{equation} \label{equ_g}
\left\| \sum_{h=1}^{H_L} \mathbb{I}_L^{(\textup{even})} (h \cdot) \right\|_2
\leq \left\| \sum_{h=1}^{H_L} p_L (h \cdot) \right\|_2 + \left\|
\sum_{h=1}^{H_L} r_L (h \cdot) \right\|_2.
\end{equation}
Furthermore,~\eqref{equ_f}, Minkowski's inequality, and Parseval's identity
imply that
\begin{eqnarray}
\left\|\sum_{h=1}^{H_L} r_L (h \cdot) \right\|_2 & \leq & H_L \|r_L\|_2
\nonumber\\
& \leq & H_L \sqrt{\sum_{j=G_{L} + 1}^\infty \frac{A^{2L}}{j^2}} \nonumber\\
& \leq & \frac{H_L A^L}{G_L^{1/2}} \nonumber\\
& \leq & \frac{(1+\eta)^L A^L}{\p(R_L) G_L^{1/2}} \nonumber\\
& \leq & 1. \label{equ_h}
\end{eqnarray}
To estimate the first term on the right-hand side of~\eqref{equ_g}, we expand
$p_L$ into a Fourier series and use the orthogonality of the trigonometric
system. Then we obtain
\begin{eqnarray}
\left\| \sum_{h=1}^{H_L} p_L (h \cdot) \right\|_2^2 & = &
\underbrace{\sum_{k_1,k_2=1}^{H_L} \sum_{j_1,j_2=1}^{G_{L}}}_{j_1 h_1 = j_2 h_2}
\frac{u_{j_1} u_{j_2}}{2}  \nonumber\\
& = &  \sum_{j_1,j_2=1}^{G_{L}} \frac{u_{j_1} u_{j_2}}{2} ~\# \Big\{ (h_1,
h_2):~1 \leq h_1, h_2 \leq H_L, ~j_1 h_1 = j_2 h_2 \Big\}. \label{equ_i}
\end{eqnarray}

To estimate the size of the sum on the right-hand size of~\eqref{equ_i}, we
assume that $j_1$ and $j_2$ are fixed. It turns out that we have $j_1 h_1 = j_2 h_2$ whenever
$$
h_1 = w \frac{j_2}{\gcd(j_1,j_2)}, \quad h_2 = w \frac{j_1}{\gcd(j_1,j_2)}
\qquad \textrm{for some positive integer $w$}
$$
(see Section 5 of~\cite{AHL} for a more detailed deduction). As a consequence we have
\begin{eqnarray}
& & \# \Big\{ (h_1, h_2):~1 \leq h_1, h_2 \leq H_L, ~j_1 h_1 = j_2 h_2 \Big\}
\nonumber\\
& = & \# \left\{w \geq 1:~ w \leq \min \left( \frac{H_L \gcd(j_1,j_2)}{j_2},
\frac{H_L \gcd(j_1,j_2)}{j_1} \right) \right\} \nonumber\\
& = & \left\lfloor \frac{H_L \gcd(j_1,j_2)}{\max(j_1,j_2)} \right\rfloor
\nonumber\\
& \leq & \frac{H_L \gcd(j_1,j_2)}{\sqrt{j_1 j_2}}. \nonumber
\end{eqnarray}

Combining this estimate with~\eqref{equ_i} we obtain
\begin{equation} \label{equ_j}
\left\| \sum_{h=1}^{H_L} p_L (h \cdot) \right\|_2^2 \leq H_L
\sum_{j_1,j_2=1}^{G_{L}} \frac{|u_{j_1} u_{j_2}|}{2}
\frac{\gcd(j_1,j_2)}{\sqrt{j_1 j_2}}.
\end{equation}
The sum on the right-hand side of the last equation is called a \emph{GCD sum}.
It is well-known that such sums play an important role in the metric theory of
Diophantine approximation; the particular sum in~\eqref{equ_j} probably appeared
for the first time in LeVeque's paper~\cite{LeVe} (see also~\cite{abs}
and~\cite{dhs}). A precise upper bound for these sums has been obtained by
Hilberdink~\cite{hilber}.\footnote{The upper bounds for the GCD sums
in~\cite{hilber} are formulated in terms of the largest eigenvalues of certain
GCD matrices; since these matrices are symmetric and positive definite, the
largest eigenvalue also gives an upper bound for the GCD sum. This relation is
explained in detail in~\cite{absw}.} Hilberdink's result implies that there
exists an absolute constant $c_{\textup{abs}}$ such that
$$
\sum_{j_1,j_2=1}^{G_{L}} \frac{|u_{j_1} u_{j_2}|}{2}
\frac{\gcd(j_1,j_2)}{\sqrt{j_1 j_2}} \ll \exp \left( \frac{c_{\textup{abs}}
\sqrt{\log (G_{L})}}{\sqrt{\log \log G_{L}}} \right) \sum_{j=1}^{G_{L}} u_j^2.
$$

Combining this estimate with~\eqref{equ_c} and~\eqref{equ_j} (and using
Parseval's identity) we have
\begin{eqnarray*}
\left\| \sum_{h=1}^{H_L} p_L (h \cdot) \right\|_2^2 & \ll & H_L \exp \left(
\frac{c_{\textup{abs}} \sqrt{\log (G_{L})}}{\sqrt{\log \log G_{L}}} \right)
\mathbb{P} \left(R_{L}\right) \\
& \ll & (1+\eta)^L \exp \left( \frac{c_{\textup{abs}} \sqrt{\log
(G_{L})}}{\sqrt{\log \log G_{L}}} \right),
\end{eqnarray*}
and, together with~\eqref{equ_g} and~\eqref{equ_h}, and with a similar argument
for the odd part of $\mathbb{I}_L$, we obtain
\begin{equation} \label{equ_k}
\left\| \sum_{h=1}^{H_L} \mathbb{I}_L (h \cdot) \right\|_2^2 \ll (1+\eta)^L \exp
\left( \frac{c_{\textup{abs}} \sqrt{\log (G_{L})}}{\sqrt{\log \log G_{L}}}
\right).
\end{equation}
By Chebyshev's inequality we have
\begin{eqnarray*}
\mathbb{P} \left( \alpha \in [0,1):~ \left| \sum_{h=1}^{H_L} \mathbb{I}_L (h
\alpha) \right| > (\log H_L) \left\| \sum_{h=1}^{H_L} \mathbb{I}_L (h \cdot)
\right\|_2 \right) & \leq & \frac{1}{(\log H_L)^2},
\end{eqnarray*}
and since $(H_L)_{L \geq 1}$ grows exponentially in $L$ these probabilities give
a convergent series when summing over $L$. Thus by the Borel--Cantelli lemma
for almost all $\alpha$ only finitely many events
$$
\left| \sum_{h=1}^{H_L} \mathbb{I}_L (h \alpha) \right| > (\log H_L) \left\|
\sum_{h=1}^{M} \mathbb{I}_L (h \cdot) \right\|_2
$$
happen, which by~\eqref{equ_k} implies that 
$$
\left| \sum_{h=1}^{H_L} \mathbb{I}_L (h \alpha) \right| \ll (1 + \eta)^{L/2}
\exp \left( \frac{\hat{c}_{\textup{abs}} \sqrt{L}}{\sqrt{\log L}} \right)
$$
for some absolute constant $\hat{c}_{\textup{abs}}$. Comparing this upper bound
with~\eqref{equ_d} and using~\eqref{equ_e} we conclude that
$$
\sum_{h=1}^{H_L}  \mathbf{1}_L (h \alpha) \gg (1+\eta)^L \qquad \textrm{as $L
\to \infty$}
$$
for almost all $\alpha$. In particular we have
$$
\sum_{L=1}^\infty \sum_{h=1}^{H_L}  \mathbf{1}_L (h \alpha) = \infty
$$
for almost all $\alpha$, and the result follows.

\end{proof}

\begin{proof}[Proof of Theorem~\ref{th_3}]

By the Koksma-Hlawka inequality (see for example~\cite{dts} or ~\cite{knu}) for every positive
integer $H$ we have
\begin{equation} \label{co1*}
N D_{N} \geq \frac{1}{4 H} \left|\sum^{N}_{n=1} e^{2 \pi i H a_{n}
\alpha}\right|.
\end{equation}

Let 
$$f_{L} \left(\alpha\right) := \left|\sum^{B_{L}}_{n=1} e^{2 \pi i a_{n} \alpha}
\right|.$$
We will show that for any given $\varepsilon >0$ for almost all $\alpha$
there are infinitely many $L$ such that there exists a positive integer $h_{L}$
with
\begin{equation} \label{equ_l}
\frac{1}{h_{L}} f_{L} \left(h_{L} \alpha\right) \gg B^{\tau-\varepsilon}_{L},
\end{equation}
which together with~\eqref{co1*} implies the conclusion of the theorem.\\

Let $\varepsilon >0$ with $\varepsilon < \frac{\tau}{2}$ be given. From the
definition of $f_{L} \left(\alpha\right)$ and the fact that $|a_n| \leq n^t$ for large $n$ it is easily seen that for sufficiently large $L$ we have
\begin{eqnarray}
\left|f_{L} \left(\alpha_{1}\right) - f_{L} \left(\alpha_{2}\right)\right| &
\leq & 2 \pi B_{L} B_{L}^{t} \left|\alpha_{1} -\alpha_{2}\right| \nonumber\\
& \leq & 2 \pi \left(B^{1+t}\right)^{L} \left|\alpha_{1} -\alpha_{2}\right|.
\label{BL*}
\end{eqnarray}

Now let $g_{L} \left(\alpha\right)$ be the function defined by 
\begin{eqnarray*}
g_{L} \left(\alpha\right):= f_{L} \left(j \left(B^{1+t}\right)^{-L}\right) & &
\mbox{for} \quad \alpha \in \left[\left. j \left(B^{1+t}\right)^{-L},
\left(j+1\right) \left(B^{1+t}\right)^{-L}\right)\right.\\
& & \mbox{and for} \quad j=0,1, \ldots, \left(B^{1+t}\right)^{L} -1
\end{eqnarray*}
(for simplicity of writing we assume that $\left(B^{1+t}\right)^{L}$ is an
integer). Then by~\eqref{BL*} we have
$$\left|g_{L} -f_{L}\right| \leq 2 \pi,$$
which means that it is sufficient to prove~\eqref{equ_l} with $f_{L}$ replaced
by $g_{L}$. By construction the function $g_{L}$ can be written as a sum of at
most $\left(B^{1+t}\right)^{L}$ indicator functions of disjoint intervals.\\

Let
$$Q:= \left\lfloor\frac{1-\tau+2 \varepsilon}{3 \varepsilon}\right\rfloor +1$$
and
$$\Delta_{i} := B_{L}^{\tau-2 \varepsilon} B_{L}^{3 \varepsilon i} \quad
\mbox{for}~ i=0,1,\ldots,Q.$$

Note that $\Delta_{0}=B_{L}^{\tau-2 \varepsilon}$ and $B_{L} \leq \Delta_{Q}
\leq B_{L}^{1+3 \varepsilon}$. Define

$$
M_{L}^{i} := \Big\{\alpha \in \left[\left.0,1\right.\right)~\mbox{with}~
\Delta_{i} < \left|g_{L} \left(\alpha\right)\right| \leq  \Delta_{i+1}\Big\}
$$
for $i=0, \ldots, Q-1.$ Then by~\eqref{equ_a} we have
\begin{eqnarray*}
\sum^{Q-1}_{i=0}\mathbb{P} \left(M_{L}^{\left(i\right)}\right) \Delta_{i+1} +
\left(1-\sum^{Q-1}_{i=0} \mathbb{P} \left(M_{L}^{(i)}\right)\right) \Delta_{0} &
\geq & \int^{1}_{0} \left|g_{L} \left(\alpha\right)\right| d \alpha \\
& > & \frac{B^{\tau- \varepsilon}_{L}}{2}
\end{eqnarray*}
for sufficiently large $L$.\\

By $\Delta_{0}=B_{L}^{\tau-2 \varepsilon}$ we have
$$\sum^{Q-1}_{i=0} \mathbb{P} \left(M_{L}^{(i)}\right) \Delta_{i+1} \geq
\frac{B_{L}^{\tau-\varepsilon}}{4}$$

for $L$ large enough. Consequently for every $L$ large enough there is an $i_{L}
\in \left\{0, \ldots, Q-1 \right\}$ with 

$$\Delta_{i_{L}+1} \mathbb{P} \left(M_{L}^{(i_{L})}\right) \geq
\frac{B_{L}^{\tau-\varepsilon}}{4Q},$$
which implies that
$$\mathbb{P} \left(M_{L}^{(i_{L})}\right) \geq \frac{1}{4Q} \frac{B_{L}^{\tau-
\varepsilon}}{\Delta_{i_{L}+1}} \geq \frac{1}{4Q} \frac{1}{B_{L}^{1-\tau+4
\varepsilon}} \geq \left(\frac{1}{B^{1-\tau+5 \varepsilon}}\right)^{L}$$
for $L$ large enough. Note that, as a consequence of the construction of
$g_{L}$, the set $M_{L}^{\left(i_{L}\right)}$ is always a union of at most
$\left(B^{1+t}\right)^{L}$ intervals. By Theorem~\ref{th_4} we conclude that for
almost all $\alpha$ for all $\eta>0$ there are infinitely many integers $h_{L}$
with $h_{L} < \left(1+\eta\right)^{L} \frac{1}{\mathbb{P}
\left(M_{L}^{\left(i_{L}\right)}\right)}$ and $\left\{h_{L} \alpha\right\} \in
M_{L}^{(i_{L})}$. Consequently for almost all $\alpha$ we have
\begin{eqnarray*}
\frac{1}{h_{L}} \left|g_{L} \left(h_{L} \alpha\right)\right| & \geq &
\frac{1}{\left(1+\eta\right)^{L}} \mathbb{P}
\left(M_{L}^{\left(i_{L}\right)}\right) \Delta_{i_{L}}\\
& \geq & \frac{1}{\left(1+\eta\right)^{L}}
\frac{\Delta_{i_{L}}}{\Delta_{i_{L}+1}} \Delta_{i_{L}+1} \mathbb{P}
\left(M_{L}^{(i_{L})}\right) \\
& \geq & \frac{1}{\left(1+\eta\right)^{L}} \frac{1}{B_{L}^{3 \varepsilon}}
\frac{B_{L}^{\tau- \varepsilon}}{4Q}\\
& \geq & B_{L}^{\tau- 5 \varepsilon}
\end{eqnarray*}
for $\eta$ small enough and for infinitely many $L$, since by assumption $B_{L}
\geq \left(B'\right)^{L}$ for some constant $B' > 1$. This proves the theorem.
\end{proof}

\section{Proofs of Theorem~\ref{th_2} and Theorem~\ref{th_1}} \label{sect_3}

\begin{proof}[Proof of Theorem~\ref{th_2}]
By Theorem~\ref{th_3} it suffices to show that
$$
\int^{1}_{0} \left|\sum^{N}_{n=1} e^{2 \pi i f(n) \alpha}\right|d \alpha >
N^{\frac{1}{2}-\varepsilon}
$$
for all $\varepsilon > 0$ and $N$ large enough.\\

Using a standard trick, by H\"older's inequality with
$$
F\left(\alpha\right):= \left|\sum^{N}_{n=1} e^{2 \pi i f(n)
\alpha}\right|^{\frac{2}{3}} \qquad \textrm{and} \qquad G\left(\alpha\right) := \left|\sum^{N}_{n=1} e^{2 \pi i f(n)
\alpha}\right|^{\frac{4}{3}}
$$ 
and with $p=\frac{3}{2}$ and $q=3$ we have
\begin{eqnarray*}
\int^{1}_{0} \left|\sum^{N}_{n=1} e^{2 \pi i f(n) \alpha}\right|^{2} d \alpha &
= & \int^{1}_{0} F\left(\alpha\right) G\left(\alpha\right) d\alpha \\
& \leq & \left(\int^{1}_{0} \left(F\left(\alpha\right)\right)^{\frac{3}{2}} d
\alpha \right)^{\frac{2}{3}} \left(\int^{1}_{0}
\left(G\left(\alpha\right)\right)^{3} d \alpha\right)^{\frac{1}{3}}\\
& = & \left(\int^{1}_{0} \left|\sum^{N}_{n=1} e^{2 \pi i f(n) \alpha}\right|
d\alpha \right)^{\frac{2}{3}} \left(\int^{1}_{0} \left|\sum^{N}_{n=1} e^{2 \pi i
f(n) \alpha}\right|^{4} d \alpha\right)^{\frac{1}{3}},
\end{eqnarray*}
hence
\begin{equation} \label{124}
\int^{1}_{0} \left|\sum^{N}_{n=1} e^{2 \pi i f(n) \alpha} \right| d \alpha \geq
\frac{\left(\left|\sum^{N}_{n=1} e^{2 \pi i f(n) \alpha}\right|^{2} d
\alpha\right)^{\frac{3}{2}}}{\left(\left|\sum^{N}_{n=1} e^{2 \pi i f(n)
\alpha}\right|^{4} d \alpha \right)^{\frac{1}{2}}}.
\end{equation}
Now
$$
\int^{1}_{0} \left|\sum^{N}_{n=1} e^{2 \pi i f(n) \alpha}\right|^{2} d \alpha =
\sum_{\substack{1 \leq m,n \leq N,\\f(m) = f(n)}} 1 \geq N,
$$
and
\begin{eqnarray*}
\int^{1}_{0} \left|\sum^{N}_{h=1} e^{2 \pi i f(n) \alpha}\right|^{4} d \alpha &
= & \underset{f(k) + f(l) = f(m) +f(n)}{\sum_{1 \leq k,l,m,n \leq N,}} 1\\
& = & \sum_{a} A^{2}_{f} (a) = \sum_{a} \tilde{A}^{2}_{f} (a),
\end{eqnarray*}
where summation in the penultimate sum is over all integers $a \in [-2 N^{t}, 2 N^{t}]$ such
that there exist $k,l$ with $1 \leq k,l \leq N$ and $f(k) + f(l) = a$, and summation in the last sum is over all $a \in [-2 N^{t},2 N^{t}]$ such that there exist $k, l$ with $1 \leq k, l \leq N$ and $f(k) -f(m) = a$. By assumption we either have
\begin{eqnarray*}
\sum_{a} A_{f}^{2} (a) & \leq & \left( \underset{a \in [-2N^t,2N^t]}{\max}~\left\{  A^{2}_{f} (a) \right\} \right) \# \left\{a:~ \exists ~ 1 \leq k,l \leq N
~\mbox{with}~f(k) + f(l) =a\right\}\\
& \leq & \left(2 N^{t}\right)^{2 \frac{\varepsilon}{2 t}} N^{2}\\
& \leq & c(\varepsilon) N^{2+ \varepsilon}
\end{eqnarray*}
for all $\varepsilon >0$ and all $N\geq N(\varepsilon)$, or we have the corresponding estimate for $\sum_{a} \tilde{A}^{2}_{f} (a)$.\\

Hence by~\eqref{124} for all $\varepsilon > 0$ and for $N \geq N\left(\varepsilon\right)$ we have
$$
\int^{1}_{0} \left|\sum^{N}_{n=1} e^{2 \pi i f(n) \alpha}\right| d \alpha \geq
\frac{N^{\frac{3}{2}}}{\left(c\left(\varepsilon\right) N^{2+
\varepsilon}\right)^{\frac{1}{2}}} =
\frac{1}{c\left(\varepsilon\right)^{\frac{1}{2}}}
N^{\frac{1}{2}-\frac{\varepsilon}{2}}
$$
and the result follows.
\end{proof}

\begin{proof}[Proof of Theorem~\ref{th_1}]
By Theorem~\ref{th_2} it suffices to show that $f(n) := P\left(m_{n}\right)$
satisfies 
$$
\tilde{A}_{f} (n) = \mathcal{O} \left(|n|^{\varepsilon}\right)
$$
for all $\varepsilon >0$.\\

Let first $m_{n} =n$. By assumption $f$ is of degree $d \geq 2$, hence there exists a non-constant polynomial $q \in \mathbb{Z}\left[x,y\right]$ such that $f(x) -f(y) = (x-y) q(x,y)$. So, if $f(x) -f(y) = n$ for some non-zero integer $n$, it follows that $x-y$ is a divisor $t$ of $n$, hence $q\left(x, x-t\right) = \frac{n}{t}$. This last equation has at most $d-1$ solutions $x$. Since $n$ has $\mathcal{O}\left(\left|n\right|^{\varepsilon}\right)$ divisors $t$ for all $\varepsilon > 0$, the assertion $\tilde{A}_{f} (n) = \mathcal{O}\left(\left|n\right|^{\varepsilon}\right)$ follows.\\

For arbitrary $\left(m_{n}\right)$ the result follows trivially from the special
result for $m_{n} =n$.
\end{proof}

\section{Proof of Theorem~\ref{th_5}} \label{sect_4}

For the proof of Theorem~\ref{th_5} we will make use of the following Lemma on
continued fractions.

\begin{lemma}
For $x \in \mathbb{R}$ let $b_{m} (x)$ denote the $m$-th continued
fraction coefficient of $x$. For any integer $\beta \geq 2$ and $L \in \mathbb{N}$
let
$$
S_{L} (\alpha):= \sum^{L}_{k=1} \sum^{L}_{m=1} b_{m} \left(\beta^{k}
\alpha\right).
$$
Then for almost all $\alpha$ we have 
$$
S_{L} (\alpha) = \mathcal{O} \left(L^{2+\varepsilon}\right)
$$
for every $\varepsilon >0$.
\end{lemma}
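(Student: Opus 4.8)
The plan is to combine the classical statistical theory of continued fractions with the measure-preserving property of the dilations $\alpha\mapsto\{\beta\alpha\}$, and to control the doubly-indexed sum $S_L$ by a truncated first-moment estimate together with the Borel--Cantelli lemma along a geometrically growing subsequence of $L$'s.

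First I would establish the key distributional estimate: there is an absolute constant $C$ such that for \emph{every} $k\geq 0$, every $m\geq 1$ and every $s\geq 1$,
$$\mathbb{P}\bigl(b_m(\beta^k\alpha)\geq s\bigr)\leq \frac{C}{s},$$
the probability being with respect to Lebesgue measure in $\alpha\in[0,1)$. This rests on three standard ingredients. For $m\geq 1$ the $m$-th partial quotient of a real number depends only on its fractional part, and $\alpha\mapsto\{\beta^k\alpha\}$ pushes Lebesgue measure on $[0,1)$ forward to itself; hence $\mathbb{P}(b_m(\beta^k\alpha)\geq s)$ equals the Lebesgue measure of $\{x\in[0,1):\ b_m(x)\geq s\}$, independently of $k$. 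Writing $b_m=b_1\circ T^{m-1}$ for the Gauss map $T$, the Gauss--Kuzmin--L\'evy theorem provides a uniform bound: the density of the pushforward of Lebesgue measure under $T^{m-1}$ is bounded above by an absolute constant $c_0$, so this measure is at most $c_0$ times the Lebesgue measure of $\{x:\ b_1(x)\geq s\}$, which a direct computation shows to equal $1/\lceil s\rceil\leq 1/s$. The uniformity in $m$ is the only genuinely analytic input, and I expect it to be the most delicate point of the argument, although it is entirely classical.

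Next, fixing $\varepsilon>0$ (it suffices to treat $\varepsilon=1/n$ with $n\in\mathbb{N}$ and afterwards intersect the resulting full-measure sets), I would put $L_j:=2^j$ and, for any $L$,
$$\Sigma_L(\alpha):=\sum_{k=1}^{L}\sum_{m=1}^{L}\min\bigl(b_m(\beta^k\alpha),\,L^{2+\varepsilon}\bigr).$$
From the key estimate, $\mathbb{E}\bigl[\min(b_m(\beta^k\alpha),L^{2+\varepsilon})\bigr]\leq\sum_{i=1}^{\lceil L^{2+\varepsilon}\rceil}C/i\ll\log L$ uniformly in $k,m$, so $\mathbb{E}[\Sigma_L]\ll L^{2}\log L$. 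Markov's inequality then gives $\mathbb{P}\bigl(\Sigma_{L_j}>L_j^{2+\varepsilon/2}\bigr)\ll L_j^{-\varepsilon/2}\log L_j$, which is summable over $j$; and $\mathbb{P}\bigl(b_m(\beta^k\alpha)>L_j^{2+\varepsilon}\text{ for some }k,m\leq L_j\bigr)\leq L_j^{2}\cdot C L_j^{-(2+\varepsilon)}=C L_j^{-\varepsilon}$ is summable over $j$ as well. By the first Borel--Cantelli lemma, for almost every $\alpha$ there is $j_0(\alpha,\varepsilon)$ so that for all $j\geq j_0$ one has simultaneously $\Sigma_{L_j}(\alpha)\leq L_j^{2+\varepsilon/2}$ and $b_m(\beta^k\alpha)\leq L_j^{2+\varepsilon}$ for all $k,m\leq L_j$; the second statement makes the truncation inactive, so $S_{L_j}(\alpha)=\Sigma_{L_j}(\alpha)\leq L_j^{2+\varepsilon/2}$.

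Finally I would pass from the subsequence to all $L$ using that $S_L(\alpha)$ is non-decreasing in $L$: for $L_j\leq L<L_{j+1}=2L_j$ we get $S_L(\alpha)\leq S_{L_{j+1}}(\alpha)\leq L_{j+1}^{2+\varepsilon/2}=2^{2+\varepsilon/2}L_j^{2+\varepsilon/2}\leq 2^{2+\varepsilon/2}L^{2+\varepsilon/2}$, and this is $\leq L^{2+\varepsilon}$ once $L$ is large. Hence $S_L(\alpha)=\mathcal{O}(L^{2+\varepsilon})$ for almost all $\alpha$, and intersecting over $\varepsilon=1/n$ gives the Lemma. The argument is thus a truncated Borel--Cantelli scheme; beyond setting up the uniform Gauss--Kuzmin bound, the only point that requires attention is that the integer parts $\lfloor\beta^k\alpha\rfloor$, which are of exponential size in $L$, are \emph{not} part of $S_L$ (the inner sum starts at $m=1$), so that each summand genuinely obeys the $\mathcal{O}(1/s)$ tail bound.
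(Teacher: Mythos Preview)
Your argument is correct. The paper's own ``proof'' is a single sentence deferring to Lemma~2 of \cite{larch}, so there is no in-paper argument to compare against; you have supplied a complete self-contained proof where the paper has none.

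A couple of minor remarks. First, the uniform tail bound $\mathbb{P}(b_m(\beta^k\alpha)\geq s)\leq C/s$ does not actually require the full Gauss--Kuzmin--L\'evy theorem: since the Gauss measure $d\mu=\frac{1}{\log 2}\frac{dx}{1+x}$ is $T$-invariant and satisfies $\frac{1}{2\log 2}\leq \frac{d\mu}{d\lambda}\leq\frac{1}{\log 2}$, one gets directly $\lambda(T^{-(m-1)}A)\leq 2\log 2\cdot\mu(T^{-(m-1)}A)=2\log 2\cdot\mu(A)\leq 2\lambda(A)$, hence $C=2$ works uniformly in $m$; combined with the measure-preservation of $\alpha\mapsto\{\beta^k\alpha\}$ this gives the claim without any asymptotic density estimate. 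Second, the monotonicity step uses $S_{L_{j+1}}\leq L_{j+1}^{2+\varepsilon/2}$, which requires the Borel--Cantelli conclusion at index $j+1$ rather than $j$; this is harmless since one only needs $j\geq j_0-1$. With these cosmetic points noted, the truncation-plus-Borel--Cantelli scheme along $L_j=2^j$ and the final monotonicity interpolation are all sound.
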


\begin{proof}
This is a special case of Lemma~2 in~\cite{larch}.
\end{proof}

\begin{proof}[Proof of Theorem~\ref{th_5}]
For given integer $d \geq 1$ we now construct a sequence $\left(b_{n}\right)_{n
\geq 1}$ which satisfies the properties stated in Theorem~\ref{th_5}.\\

Let $\left(b_{n}\right)_{n \geq1}$ be the strictly increasing sequence of
integers running through the integers
$$
2^{d k} + j 2^{d k +d-k}
$$
for $j=0,\ldots, 2^{k}-1$ and $k=1,2,3,\ldots$.\\

We have
\begin{eqnarray*}
\frac{b_{n+1}}{b_{n}} & = & \frac{2^{d k}+j 2^{d k+d-k}+2^{d k+d-k}}{2^{d k}+j
2^{d k +d-k}} \\
& = & 1+ \frac{1}{2^{k-d}+j}\\
& \geq & 1 + \frac{1}{\left(2^{d k}+ j2^{d k +d-k}\right)^{\frac{1}{d}}}\\
& = & 1 + \frac{1}{{b_{n}}^{\frac{1}{d}}},
\end{eqnarray*}
since (as is easily checked)
\begin{eqnarray*}
2^{k-d}+j & \leq &\left(2^{d k} + j 2^{d k +d-k}\right)^{\frac{1}{d}} 
\end{eqnarray*}
for $j=0$ and $j=2^{k}$ and hence for all $j=0, \ldots, 2^{k}$. So
$\left(b_{n}\right)_{n \geq 1}$ has polynomial growth behavior of degree at
least $d$.\\

Let $N$ be given and $k_{0}$ and $j_{0}$ such that $b_{N} = 2^{d k_{0}}+j_{0}
2^{d k_{0} +d -k_{0}}$ for some $j_{0} \in \left\{0, \ldots,
2^{k_{0}}-1\right\}.$ By standard techniques from the theory of uniform
distribution theory for the discrepancy $D_{N}$ of $\left(\left\{b_{n}
\alpha\right\}\right)_{n \geq 1}$ we have
$$
ND_{N} \leq \sum^{k_{0}}_{k=1} 2^{k} D^{(k)},
$$
where by $D^{(k)}$ we denote the discrepancy of the subsequence
$$
\left(\left\{2^{d k} \alpha + j 2^{\left(d-1\right) k} 2^{d}
\alpha\right\}\right); \quad j=0,1,\ldots, 2^{k}-1.
$$
For this sequence it is also well known (see for example~\cite{knu}) that
$$2^{k} D^{(k)} \leq c_{\textup{abs}} \sum^{2 k}_{m=1} b_{m}
\left(\left(2^{(d-1)}\right)^{k} 2^{d} \alpha\right),$$
with an absolute constant $c_{\textup{abs}}$, hence
$$ND_{N} \leq c_{\textup{abs}} \sum^{2 k_{0}}_{k=1} \sum^{2 k_{0}}_{m=1} b_{m}
\left(\beta^{k} 2^{d} \alpha \right),$$
where $\beta=2^{\left(d-1\right)}$ and $b_{m} (x)$ again denotes the
$m$-th continued fraction coefficient of $x$. By Lemma 1 therefore for
almost all $2^{d} \alpha$ (and consequently also for almost all $\alpha$) we have
$$
ND_{N} = \mathcal{O} \left(k_{0}^{2+\varepsilon}\right) = \mathcal{O}
\left(\left(\log N\right)^{2 + \varepsilon}\right) \qquad \textrm{as} \quad N \to \infty
$$
for all $\varepsilon >0$ (note that $N \geq 2^{k_{0}}$). This proves the
theorem.
\end{proof}

\end{document}